\newcommand{\R}{\mathbb{R}}
\renewcommand{\P}{\mathbb{P}}
\newcommand{\PROB}{{\mathbb{P}}}
\newcommand{\ind}[1]{\mathbb{I}_{\{ #1 \}}}
\theoremstyle{plain}
  \newtheorem{theorem}{Theorem}[section]
  \newtheorem{proposition}[theorem]{Proposition}
  \newtheorem{lemma}[theorem]{Lemma}
   \newtheorem{cor}[theorem]{Corollary}
\theoremstyle{definition}
  \newtheorem{remark}[theorem]{Remark}
\edef\@tempa#1#2{\def#1{\mathaccent\string"\noexpand\accentclass@#2
}} \@tempa\ring{017}
\begin{document}
\title{Boundary crossing identities  for  diffusions having the time inversion property}

\author{ L. Alili}
\address{Department of Statistics, The University of Warwick,
Coventry CV4 7AL, United Kingdoms}
\email{l.alili@Warwick.ac.uk}

\author{P. Patie}
\address{Department of Mathematical Statistics and Actuarial Science, University of
Bern, Alpeneggstrasse 22,  CH-3012 Bern, Switzerland}
\email{patie@stat.unibe.ch}



\begin{abstract}
We review and study a one-parameter family of functional transformations,
denoted by $(S^{(\beta)})_{\beta\in \R}$, which,  in the case $\beta<0$,
provides a path realization of bridges associated to the family of diffusion
processes enjoying the time inversion property. This family
includes the Brownian motion,  Bessel processes with
a positive dimension and their conservative $h$-transforms. By
means of these transformations, we derive an explicit and simple
expression which relates the law of the boundary crossing times
for these diffusions over a given function $f$ to those over the
image of $f$ by the mapping $S^{(\beta)}$, for some fixed
$\beta\in \mathbb{R}$. We give some new examples of  boundary
crossing problems for the  Brownian motion and  the family of
Bessel processes. We also
provide, in the Brownian case, an interpretation of the results obtained by the standard
method of images and establish connections between the exact
asymptotics for large time of the densities corresponding to
various curves of each family.
\end{abstract}

\keywords{Self-similar diffusions;  Brownian motion; Bessel
processes; Time inversion property;
 Boundary crossing problem}
\subjclass[2000]{60J60, 60J65, 60G18, 60G40}

\maketitle

\section{Introduction}
\noindent Let $X:=(X_t, t\geq 0)$  be a $E$-valued, where $E= \mathbb{R}$ or
$\mathbb{R}_+=[0,\infty)$,  conservative, in the sense that it has an infinite life-time, $2$-self-similar homogenous diffusion
enjoying the time inversion property in the sense of Shiga and
Watanabe \cite{Shiga-Watanabe-73}.  We denote by
$(\mathbb{P}_x)_{x\in E}$ the family of probability measures of $X$
which act on $\mathcal{C}(\mathbb{R}_+,E)$, the space of
continuous functions from $\mathbb{R}_{+}$ into $E$,  such that
$\P_x(X_0=x)=1$. The standard notation $\mathcal{F}_t$ is used for
the $\sigma$-algebra generated by the process $X$ up to time $t$
and we write simply $\mathcal{F}=\mathcal{F}_{\infty}$. The above assumptions  mean that, for any $x\in E$ and $c>0$,
\begin{equation*} \textrm{the law of }(c^{-1}X_{c^2t},t\geq0;\P_{cx}) \textrm{ is }
\mathbb{P}_x \qquad (\textrm{2-self-similarity})
\end{equation*}
and the process
\begin{equation} \label{eq:ti}
i(X):=(tX_{1/t}, t>0;
\mathbb{P}_x) \qquad(\textrm{time inversion})
\end{equation} is a homogeneous conservative diffusion.    It is well known
that the class of processes considered therein consists of  Brownian
motion and Bessel processes of positive dimensions, see Watanabe
\cite{Watanabe-75}.
Next, let $f\in \mathcal{C}(\mathbb{R}_+,E)$,
such that $f(0) \neq X_0$ and set
$$T^{f} = \inf \left\{s
> 0; \:X_s = f(s) \right\}$$ with the usual convention  that $\inf\{
\emptyset \}=+\infty$. The study of the distribution of the
stopping time $T^{f}$ is known as the boundary crossing or first
passage problem.  Unfortunately, the explicit determination of
these distributions is only attainable for a few specific
functions. In this paper,  we suggest a new method which allows to
relate, in a simple and explicit manner,  the law of $T^{f}$ with
the ones of the family of stopping times $(T^{f^{(\beta)}})_{\beta
\in \mathbb{R}}$ with $f^{(\beta)}:=S^{(\beta)}(f)$ and
\begin{eqnarray} \label{definition-transformation}
S^{(\beta)}: \quad \mathcal{C}(\mathbb{R}_+, E) &
\rightarrow & \mathcal{C}([0,\zeta^{(\beta)}), E) \\ \nonumber f
&\rightarrow& \left(1+\beta .\right)f\left(\frac{.}{1+\beta
.}\right)
\end{eqnarray}
where  \[\zeta^{(\beta)}= \begin{cases} 1/|\beta|, \quad & \textrm{if } \beta <0, \\
+\infty, & \textrm{otherwise}.
 \end{cases}\]
 The results extend to $h$-transforms of the prescribed processes leading to conservative diffusions.

 In the case $\beta<0$, our methodology has a simple description. Indeed, as a consequence of the time inversion property, Pitman and Yor \cite{Pitman-Yor-81} showed that the law  of the process $(S^{(\beta)}(X)_t, 0\leq t< \zeta^{(\beta)})$ corresponds to the law of a bridge associated to $X$.  Thus, one may relate, by means of a deterministic time change, the law of $T^{f}$ with the first crossing time of the curve $f^{(\beta)}$ by the bridge $(S^{(\beta)}(X)_t, 0\leq t< \zeta^{(\beta)})$.  Finally, our identities are then obtained  by using the construction of the law of bridges of  Markov processes as a Doob $h$-transform of their laws, see Fitzsimmons et al.~\cite{Fitzsimmons-Pitman-Yor-93}. We also show that similar devices can be implemented in  the case $\beta>0$.

The motivations for  investigating the boundary crossing problem  are of both practical and
theoretical importance. Indeed, on the one hand, such studies were
originally motivated by their connections to sequential analysis,
see e.g.~Robbins and Siegmund \cite{Robbins-Siegmund-70} and
Smirnov-Kolmogorov test, see e.g.~Lerche \cite{Lerche-86}.  On the
other hand, this problem has  found many applications in several
fields of sciences, such as mathematical physics \cite{Zambrini-Lescot-05},  neurology  \cite{Lanski-Sacerdote-01}, epidemiology
\cite{Martin-Lof-98} and mathematical finance \cite{Bielecki-Jeanblan-Rut-04} and \cite{Patie-th-04}.

Amongst general results, in the Brownian setting, Strassen
\cite{Strassen-67} proved
 that if $f$ is continuously differentiable  then the distribution of $T^{f}$ is absolutely continuous with a
continuous probability density function. Moreover, for elementary
curves, the density is known explicitly for linear, square root and
parabolic functions. For these curves, specific techniques proved to
be efficient and we refer to Table \ref{table} in Section 2  for a
description of these cases. Besides, the distribution is known for
some concave curves solving implicit equations obtained by the
celebrated standard method of images, see Lerche \cite{Lerche-86}.
For some recent investigations, we refer to P\"otzelberger and Wang
\cite{Potzelberger-Wang-01} and Borovkov and Novikov
\cite{Borovkov-Novikov--05} for numerical approximations of the
density, Peskir \cite{Peskir-02-2}, \cite{Peskir-02} for the study
of the small time behavior of the density and Kendall et al.~
\cite{Kendall-Martin-Robert-04b} for statistical applications.

The remaining part of the paper is organized as follows. In the next
Section we recall some recent results regarding diffusions which
enjoy the time inversion property. Section \ref{Mainresults}
is devoted to the statement of our main results and their proofs. Section \ref{Brownian-motion} is concerned
with a detailed study of the Brownian motion case. In particular, we present several new explicit examples of the boundary crossing problem. We also show
how our results translate and agree with the standard method of
images. Finally, in Section \ref{Section-Application-Bessel}, we
treat the example of Bessel processes and characterize the
distribution of hitting times of straight lines. We also mention that, in the
Brownian motion case, the boundary crossing identity
\eqref{switching-identity}, stated below,  has been published
without proofs in the note \cite{Alili-Patie-cras-05}.

\section{Preliminaries}

Let us recall that $E$ is either $\R^+$ or $\mathbb{R}$ and
 $X:=(X_t, t\geq 0)$ is a  $2$-self-similar conservative
homogenous diffusion enjoying the time inversion property.  In
recent papers,  Gallardo and Yor \cite{Gallardo-Yor-05} and Lawi
\cite{Lawi-08} characterized the class of self-similar Markov
processes (which might have c\`adl\`ag paths) satisfying the time
inversion property in terms of their semi-groups when the latter are
assumed to be absolutely continuous and twice differentiable. More
precisely, they showed that if we write $\P_x(X_t\in
dy)=p_t(x,y)dy$ the
transitions densities have the following form
 \begin{equation*}
p_t(x,y) = \frac{c}{\sqrt{t}} \Phi\left(\frac{xy}{t}\right)
\left(\frac{y}{\sqrt{t}}\right)^{2\nu+1} e^{-\frac{x^2+y^2}{2t}}, \quad t>0,\: x,y\in E,
\end{equation*}
where the reals $\nu$ and $c$ are related to the function
$\Phi:E\rightarrow \mathbb{R}_+$ as follows.
\begin{enumerate}
\item If $E=\mathbb{R}$, then $X$ is a Brownian motion, $\Phi(y)=e^{y}$ and
necessarily $\nu=-1/2$ and $c=1/\sqrt{2\pi}$.
\item If
$E=\R^+$ then $X$ is Bessel process of dimension $\delta$ for some $\delta>0$,
$c=1$, $\nu=\frac{\delta}{2}-1$ and $\Phi(y)=y^{-\nu}I_{\nu}(y)$ where $I_{\nu}$ is the modified
Bessel function of the first kind of index $\nu$ which  admits the
following power series representation, see e.g.~\cite[Section
5.2]{Lebedev-72},
\begin{equation*}
I_{\nu}(z)=\left(\frac{z}{2}\right)^{\nu}\sum_{n=0}^{\infty}\frac{(z/2)^{2n}}{n!\Gamma(\nu+n+1)},\:
|z|<\infty,\: |\textrm{ arg}(z)|<\pi.
\end{equation*}
Moreover, it is well known that, in this case,  the state $0$ is
\begin{itemize}
\item[-] an entrance boundary if $\delta\geq2$,
\item[-] a reflecting boundary if $0<\delta<2$.
\end{itemize}
\end{enumerate}
Next, recalling that, for every fixed $y\in E$, the mapping
$x\mapsto \Phi(yx)$ is $\frac{y^2}{2}$-excessive, see e.g.~\cite{Borodin-Salminen-02}, we can define a
new family of probability measures as follows
\begin{equation}\label{h-transform}
\hbox{d}\P_x^{(y)}{}_{|{\mathcal{F}_t}}=\frac{\Phi(y X_t)}{\Phi(y
x)}e^{-\frac{1}{2}{y}^2 t} \hbox{d}\P_x{}_{|{\mathcal{F}_t}}, \quad
t>0.
\end{equation}
Note, from the definition of $\Phi$, that $\P_x^{(0)}=\P_x$. Moreover, for any $y \in E$, $(\mathbb{P}_{x}^{(y)})_{x\in E}$ is precisely the family of probability measures of the process $i(X)$ when $X$ starts at $y$. We also recall that $i(X)$ is conservative and satisfies the time inversion property
\eqref{eq:ti}, see \cite{Gallardo-Yor-05}. It follows that $i(X)$ has an absolutely
continuous semi-group with  densities given, for any $t>0$ and $x,a \in E$, by
\begin{eqnarray} \label{eq:dens_h_transf}
p^{(y)}_t(x,a)&=&\frac{1}{\sqrt{t}}\frac{\Phi(ya)}{\Phi(y
x)}\Phi\left(\frac{ax}{t}\right)\left(\frac{a}{\sqrt{t}}
\right)^{2\nu+1}e^{-\frac{1}{2}\left( ty^2+(x^2+a^2)/t \right)}.
\end{eqnarray}
Note, in particular,  the following:
\begin{enumerate}
\item If $E=\mathbb{R}$ then $i(X)$ is a
Brownian motion with drift $y$.
\item If $E=\R^+$ then $i(X)$ is the so-called Bessel process in the
wide sense  introduced by Watanabe \cite{Watanabe-75}.
\end{enumerate}
Now, from the definition  of the family of mappings $(S^{(\beta)})_{\beta \geq 0}$, see \eqref{definition-transformation}, we observe that for any $f\in \mathcal{C}(\R^+,\R)$ and $\alpha, \beta \in\mathbb{R}$, we have $S^{(0)}(f) = f$ and $
S^{(\alpha)}\circ S^{(\beta)}=S^{(\alpha+\beta)}$ and so
 $(S^{(\beta)})_{\beta \geq 0}$ is a semi-group
on $\mathcal{C}(\R^+,\R)$.  Moreover, the family of mappings $(S^{(\beta)})_{\beta<0}$ naturally appears in the
process of
 construction of bridges associated to $X$. To explain the connection, for any fixed $T>0$
and $x\in E$, let us denote by $(\P^{(y)}_{x, z,T}, z\in E )$  a
regular version of the family of conditional probability
distributions $\left(\P^{(y)}_x(. |X_T = z), z \in E\right)$. That is, for some
$z\in E$, $\P^{(y)}_{x,z,T}$ is the law of the bridge of length $T$ associated to $X$, between $x$ and $z$. This, according to Fitzsimmons and al.~\cite{Fitzsimmons-Pitman-Yor-93}, can be obtained from $\P^{(y)}_{x}$ as the following Doob $h$-transform
 \begin{equation}\label{h-bridges}
 \P^{(y)}_{x,z,T}=\frac{p^{(y)}_{T-s}(X_s, z)}{p^{(y)}_{T}(x,z)} \P^{(y)}_{x}  \quad \textrm{on } \mathcal{F}_s,\: s<T.
 \end{equation}
Fitzsimmons \cite{Fitzsimmons-98} observed  that these laws remain
invariant under changes of probability measures of type
\eqref{h-transform}. Finally, we recall that Pitman and Yor \cite[Theorem
5.8]{Pitman-Yor-81}, see also \cite[Theorem 1]{Gallardo-Yor-05},
showed, by means of the time inversion property that, for any $x$, $z\in E$, the processes
\begin{equation} \label{eq:ibs}
\{X_u, 0\leq u<T; \mathbb{P}_{x,Tz,T}^{(y)}\} \textrm{ and }
\{S^{(-1/T)}(X)_u, 0\leq u<T; \mathbb{P}_x^{(z)} \}
\end{equation}
have the same law. Note that, in the Brownian case,  (\ref{eq:ibs}) can
also be seen from the unique decomposition, as a semi-martingale,
in its own filtration, of the Brownian bridge of length
$T=-\frac{1}{\beta}>0$, between $0$ and $0$. Indeed,  for any
 $0\leq t<T$, we can write
\begin{equation*}
  S^{(-1/T)}(X)_t=(T-t)\int_0^{t} \frac{d\tilde{X}_s}{T- s}, \quad X_t= T\int_0^{\frac{Tt}{T - t}}
\frac{d\tilde{X}_s}{T-  s},\quad t<T,
\end{equation*}
where  $\tilde{X}$ is a Brownian motion on $[0, \zeta^{(-\beta)}]$
with respect to the filtration generated by $
(S^{(-\frac{1}{T})}(X)_t, 0\leq t<T)$. Thus, we have
\begin{equation*}
S^{(-1/T)}(X)_t = \tilde{X}_t - \int_0^t
\frac{S^{(-1/T)}(X)_s}{T-s}ds,\quad t < T.
\end{equation*}
which coincides with the canonical decomposition of the standard
Brownian bridge.

\section{Main results and proofs}\label{Mainresults}

\noindent We keep the notation and setting of the previous section and assume, throughout this section, that  $\beta$ is some fixed real. We
proceed by pointing out that the mapping $S^{(\beta)}$ can be
defined similarly on the space of probability measures. For
instance, in the absolutely continuous case, we associate to
$\mu(dt)=h(t)dt$ the image $S^{(\beta)}(\mu)(dt)= h^{(\beta)}(t) dt$
where we recall that $h^{(\beta)}(t):= S^{(\beta)}(h)(t)$. We are
now ready to state our main result.

\begin{theorem} \label{thm} Let $f\in \mathcal{C}(\mathbb{R}_+,
E)$. Then, for any $x$, $y\in E$ such that $f(0)\neq x$, and
$t<\zeta^{(\beta)}$, we have
\begin{eqnarray*}
\P^{(y)}_x(T^{f^{(\beta)}}\in dt)&=& (1+\beta t)^{\nu-2}
\frac{\Phi(yf^{(\beta)}(t))}{\Phi(yf^{(\beta)}(t)/(1+\beta
t))}e^{-\frac{\beta}{2}\frac{y^2t^2}{1+\beta
t}}e^{-\frac{\beta}{2}{\frac{{f^{(\beta)}}^2(t)}{1+\beta
  t}}+\frac{\beta}{2}x^2}
\\
&\times & S^{(\beta)}\left(\P^{(y)}_x( T^{f} \in
dt)\right).\nonumber
\end{eqnarray*}
\noindent The particular case $y=0$ yields
\begin{equation}\label{switching-identity}
\mathbb{P}_x\left(T^{f^{(\beta)}}\in dt\right)=(1+\beta
t)^{\nu-2}e^{-\frac{\beta}{2}{\frac{{f^{(\beta)}}^2(t)}{1+\beta
  t}+\frac{\beta}{2}x^2}} \times
S^{(\beta)}\left(\mathbb{P}_x\left( T^{f} \in dt\right)\right).
\end{equation}
\end{theorem}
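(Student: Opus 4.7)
The plan is to combine the Pitman--Yor identification \eqref{eq:ibs} of $S^{(\beta)}(X)$ with a bridge of $X$, the Doob $h$-transform realisation of bridges \eqref{h-bridges}, and the explicit density \eqref{eq:dens_h_transf}. Treat first $\beta<0$; set $T:=-1/\beta$ and $\phi(u):=u/(1+\beta u)$, so that $\zeta^{(\beta)}=T$ and $\phi$ is a bijection $[0,T)\to[0,\infty)$ with inverse $\phi^{-1}(s)=s/(1-\beta s)$. Reading off the definition of $S^{(\beta)}$ immediately gives the pathwise identity
\[
T^{f^{(\beta)}}\bigl(S^{(\beta)}(X)\bigr)=\phi^{-1}(T^{f}(X)).
\]

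By \eqref{eq:ibs} with $z=y$ and Fitzsimmons' invariance of bridge laws under \eqref{h-transform} (so that $\P^{(y)}_{x,Ty,T}$ does not depend on the superscript $y$), the process $(S^{(\beta)}(X)_u)_{0\le u<T}$ under $\P^{(y)}_x$ is distributed as the bridge $\P^{(y)}_{x,Ty,T}$. Pushing the law of $T^{f}(X)$ under $\P^{(y)}_x$ through $\phi^{-1}$ introduces a Jacobian $(1+\beta t)^{-2}$; comparing with $S^{(\beta)}(h)(t)=(1+\beta t)h(t/(1+\beta t))$ on densities yields
\[
\P^{(y)}_{x,Ty,T}(T^{f^{(\beta)}}\in dt)=\frac{1}{(1+\beta t)^3}\,S^{(\beta)}\bigl(\P^{(y)}_x(T^{f}\in\cdot)\bigr)(dt).
\]
Independently, the strong Markov property at $T^{f^{(\beta)}}$ inside \eqref{h-bridges}, using $X_{T^{f^{(\beta)}}}=f^{(\beta)}(T^{f^{(\beta)}})$, gives
\[
\P^{(y)}_{x,Ty,T}(T^{f^{(\beta)}}\in dt)=\frac{p^{(y)}_{T-t}(f^{(\beta)}(t),Ty)}{p^{(y)}_T(x,Ty)}\,\P^{(y)}_x(T^{f^{(\beta)}}\in dt).
\]
Eliminating the bridge probability between these two identities reduces the theorem to substituting \eqref{eq:dens_h_transf} and simplifying.

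For that substitution one uses $T-t=T(1+\beta t)$ and $\beta T=-1$: the $\Phi$-factors collapse to $\Phi(yf^{(\beta)}(t))/\Phi(yf^{(\beta)}(t)/(1+\beta t))$, the net power of $(1+\beta t)$ reads $(\nu+1)-3=\nu-2$, and the Gaussian exponents consolidate to $-\tfrac{\beta}{2}\tfrac{y^2t^2}{1+\beta t}-\tfrac{\beta}{2}\tfrac{f^{(\beta)}(t)^2}{1+\beta t}+\tfrac{\beta}{2}x^2$ (the $y^2t^2$-piece arising from $\tfrac{T\beta^2 y^2 t^2}{2(1+\beta t)}=-\tfrac{\beta y^2 t^2}{2(1+\beta t)}$). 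This is the claimed identity, and $y=0$ reads off as \eqref{switching-identity}. The case $\beta>0$ is handled by the same scheme: \eqref{eq:ibs} applied with $T:=1/\beta$ now gives $S^{(-\beta)}(X)\stackrel{d}{=}B$ for $B\sim\P^{(y)}_{x,Ty,T}$, hence $X\stackrel{d}{=}S^{(\beta)}(B)$, and \eqref{h-bridges} is applied to the bridge hitting time $T^{f}(B)$ rather than to $T^{f^{(\beta)}}(B)$; the mirror-image change of variable plus density ratio produces the same formula. The main obstacle is this last algebraic matching; the conceptual steps are otherwise essentially formal, once one is careful about justifying the strong Markov step at $T^{f^{(\beta)}}$ (e.g.\ by truncation at $T^{f^{(\beta)}}\wedge s$ with $s\uparrow T$).
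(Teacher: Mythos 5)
Your argument is correct, and for $\beta<0$ it is essentially the paper's own proof: your pathwise identity $T^{f^{(\beta)}}(S^{(\beta)}(X))=\phi^{-1}(T^{f}(X))$ is Lemma~\ref{doob}, and applying the bridge $h$-transform \eqref{h-bridges} at the stopping time is exactly the combination of Lemma~\ref{abs-diff} --- whose Radon--Nikodym density $M_x^{y,\beta}(t,X_t)$ is precisely the ratio $p^{(y)}_{T-t}(X_t,Ty)/p^{(y)}_{T}(x,Ty)$ with $T=-1/\beta$ --- with the optional stopping theorem; the paper merely isolates these two steps as separate lemmas before performing the change of variables $u=t/(1-\beta t)$, and your algebraic checks (net power $(\nu+1)-3=\nu-2$, the collapse of the $\Phi$-factors, the Gaussian exponent including the $-\frac{\beta}{2}\frac{y^2t^2}{1+\beta t}$ term) all verify against the stated formula. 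Where you genuinely diverge is the case $\beta>0$. Since $S^{(\beta)}(X)$ then has infinite lifetime and is not a bridge, the paper proves the absolute-continuity relation by a different device: It\^o's formula shows $M_x^{0,\beta}(t,X_t)$ is a local martingale, a deterministic time change of the squared Bessel equation \eqref{squared-Bessel} together with Girsanov's theorem identifies the candidate law, and conservativeness of the resulting process upgrades the local martingale to a true martingale, the case $y\neq 0$ being recovered via \eqref{h-transform}. You instead run \eqref{eq:ibs} backwards: with $T=1/\beta$, the semigroup property $S^{(\beta)}\circ S^{(-\beta)}=\mathrm{id}$ gives that $X$ under $\P_x^{(y)}$ has the same law as $S^{(\beta)}(B)$ for a bridge $B$ of law $\P^{(y)}_{x,Ty,T}$, and the same $h$-transform/time-change algebra applies with the roles of $f$ and $f^{(\beta)}$ exchanged (here $T-\phi(t)=T/(1+\beta t)$ replaces $T-t=T(1+\beta t)$, and the exponent computation again yields $-\frac{\beta}{2}\frac{y^2t^2}{1+\beta t}-\frac{\beta}{2}\frac{f^{(\beta)}(t)^2}{1+\beta t}+\frac{\beta}{2}x^2$). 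This is a genuine simplification: it unifies the two signs of $\beta$ and avoids the SDE/martingale verification entirely, at the modest cost of leaning on the optional-stopping step for the bridge density as $s\uparrow T$, which you correctly flag and which is harmless since the identity is only needed on $\{T^{f}\le s\}$ for $s<T$.
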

Theorem \ref{thm}   simplifies
when the focus is on straight lines. Indeed, if we consider a
constant function $f\equiv a$ where $a\neq 0$ then,  with
$\beta=b/a$, for some $b\in \R$, we have $f^{(b/a)}(t)=a + bt,\: t<
\zeta^{(b/a)}$.
 Note that if
$b<0$ then the support of $T^{a+b\cdot }$ is $(0,-b/a)$. The
previous result reads as follows.
\begin{cor} \label{thm-straight-lines} For any fixed $a,b,x \in E$ such that $a \neq x$ and
all $t<\zeta^{(b/a)}$, we have
\begin{eqnarray*}
\P^{(y)}_x\left(T^{a+b.}\in dt\right)&=&(1+b t/a)^{\nu-2}
\frac{\Phi(y(a+ b
t))}{\Phi(ya)}\exp{-\frac{b}{2}\left(a+bt+\frac{t^2y^2}{a+bt}-
\frac{x^2}{a}\right)}\\
&\times& S^{(b/a)}\left(\P^{(y)}_x( T^{a} \in dt)\right).\nonumber
\end{eqnarray*}
 In particular, when $y=0$, we obtain
\begin{eqnarray*}
\mathbb{P}_x\left(T^{a+b.}\in dt\right)&=&(1+b t/a)^{\nu-2}
\exp{-\frac{b}{2}(a+bt-
x^2/a)}\\
&\times& S^{(b/a)}\left(\P_x( T^{a} \in dt)\right).
\end{eqnarray*}
\end{cor}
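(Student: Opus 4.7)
The corollary is a straightforward specialization of Theorem \ref{thm} with the constant function $f \equiv a$ and parameter $\beta = b/a$. The plan is to simply feed these into the formula of the theorem and simplify.

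First I compute $f^{(\beta)}$ from the definition \eqref{definition-transformation}: since $f$ is constant equal to $a$, we obtain
\[
f^{(b/a)}(t) = \left(1 + (b/a)t\right)\cdot a = a+bt, \qquad t<\zeta^{(b/a)},
\]
where $\zeta^{(b/a)} = a/|b|$ if $b<0$ and $\zeta^{(b/a)}=+\infty$ otherwise, thereby identifying the support of $T^{a+b\cdot}$ in the case $b<0$.

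The key algebraic observation is the cancellation
\[
\frac{f^{(\beta)}(t)}{1+\beta t} = \frac{a+bt}{1+bt/a} = a,
\]
which collapses the ratio $\Phi(yf^{(\beta)}(t))/\Phi(yf^{(\beta)}(t)/(1+\beta t))$ appearing in Theorem \ref{thm} to $\Phi(y(a+bt))/\Phi(ya)$. The remaining work is a routine recombination of the exponential terms with $\beta=b/a$: one has $\frac{\beta}{2}\frac{y^2 t^2}{1+\beta t} = \frac{b}{2}\frac{y^2 t^2}{a+bt}$, $\frac{\beta}{2}\frac{{f^{(\beta)}}^2(t)}{1+\beta t} = \frac{b}{2}(a+bt)$, and $\frac{\beta}{2}x^2 = \frac{b}{2}\cdot\frac{x^2}{a}$, which assemble into the single exponent $-\frac{b}{2}\left(a+bt+\frac{t^2y^2}{a+bt}-\frac{x^2}{a}\right)$ claimed in the statement.

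The specialization to $y=0$ then requires only two observations: the factor $e^{-\frac{b}{2}\cdot\frac{t^2 y^2}{a+bt}}$ trivially becomes $1$, while the ratio $\Phi(y(a+bt))/\Phi(ya)$ equals $\Phi(0)/\Phi(0)=1$ independently of whether $X$ is Brownian motion ($\Phi(z)=e^z$) or Bessel ($\Phi(z)=z^{-\nu}I_\nu(z)$, which has a finite non-zero value at $0$ via its power series), consistent with $\P_x^{(0)}=\P_x$. There is no genuine obstacle here; the entire content of the corollary lies in the identity $f^{(b/a)}(t)=a+bt$ together with the cancellation $f^{(\beta)}(t)/(1+\beta t)=a$, and the only thing to watch is to substitute $\beta=b/a$ cleanly into each of the three exponential terms of Theorem \ref{thm}.
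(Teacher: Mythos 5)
Your proposal is correct and coincides with the paper's own treatment: the corollary is obtained exactly by specializing Theorem~\ref{thm} to $f\equiv a$ and $\beta=b/a$, using $f^{(b/a)}(t)=a+bt$ and the cancellation $f^{(\beta)}(t)/(1+\beta t)=a$, which is all the paper does (it offers no separate proof beyond this substitution). Your algebraic verification of the three exponential terms and of the $y=0$ limit matches the intended argument.
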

Before proving Theorem \ref{thm} we need to prepare two intermediate results. To this end, let us denote by $H^{(\beta,f)}$
the first time when $S^{(\beta)}(X)$ crosses $f$ i.e.
\begin{equation*}
H^{(\beta, f)} = \inf \left\{0<s < \zeta^{(\beta)};
\:S^{(\beta)}(X)_s =f (s) \right\}.
\end{equation*}
The aim of the next result is to relate the stopping times
$H^{(\beta,f)}$ and $T^{f^{(\beta)}}$.
\begin{lemma} \label{doob} The identities
\begin{equation*}
H^{(-\beta,f)}= \frac{T^{f^{(\beta)}}}{1+\beta T^{f^{(\beta)}}}
\quad \textrm{ and } \quad T^{f^{(\beta)}} =
\frac{H^{(-\beta,f)}}{1-\beta H^{(-\beta,f)}}
\end{equation*}
hold almost surely. In particular, we  have
$\left\{H^{(-\beta,f)}
<\zeta^{(-\beta)}\right\}=\left\{T^{f^{(\beta)}}<\zeta^{(\beta)}
\right\}$.
\end{lemma}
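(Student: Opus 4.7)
The plan is to recognize that $S^{(-\beta)}$ acts on the path $X$ as a purely deterministic, strictly monotone time change, so the first time $S^{(-\beta)}(X)$ meets $f$ is the image, under the inverse of that time change, of the first time $X$ meets $f^{(\beta)}$.

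First I would unwind the definitions. From \eqref{definition-transformation}, for $s<\zeta^{(-\beta)}$,
\[
S^{(-\beta)}(X)_{s}=(1-\beta s)\,X\!\left(\frac{s}{1-\beta s}\right),
\]
so the equation $S^{(-\beta)}(X)_{s}=f(s)$ is equivalent to
\[
X_{u}=(1+\beta u)\,f\!\left(\frac{u}{1+\beta u}\right)=f^{(\beta)}(u),\qquad u:=\frac{s}{1-\beta s},
\]
where I used the easily verified inverse relations $s=u/(1+\beta u)$ and $1-\beta s=1/(1+\beta u)$. In other words, once the substitution $u=s/(1-\beta s)$ is performed, a crossing of $f$ by $S^{(-\beta)}(X)$ at time $s$ is literally the same event as a crossing of $f^{(\beta)}$ by $X$ at time $u$.

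Next I would check that the map $\varphi:s\mapsto s/(1-\beta s)$ is a continuous, strictly increasing bijection from $[0,\zeta^{(-\beta)})$ onto $[0,\zeta^{(\beta)})$ sending $0$ to $0$, with inverse $\varphi^{-1}(u)=u/(1+\beta u)$. Checking the endpoint behaviour separately in the cases $\beta>0$ (so $\zeta^{(-\beta)}=1/\beta$, $\zeta^{(\beta)}=+\infty$) and $\beta<0$ (so $\zeta^{(-\beta)}=+\infty$, $\zeta^{(\beta)}=1/|\beta|$) is straightforward; monotonicity follows from $\varphi'(s)=(1-\beta s)^{-2}>0$. Because $\varphi$ is an order-preserving bijection, the infimum of the positive set of crossing times on the $s$-side equals $\varphi^{-1}$ of the infimum on the $u$-side, which gives
\[
H^{(-\beta,f)}=\varphi^{-1}\!\left(T^{f^{(\beta)}}\right)=\frac{T^{f^{(\beta)}}}{1+\beta T^{f^{(\beta)}}},
\]
and solving this identity for $T^{f^{(\beta)}}$ yields the second formula. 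The equality of events $\{H^{(-\beta,f)}<\zeta^{(-\beta)}\}=\{T^{f^{(\beta)}}<\zeta^{(\beta)}\}$ is then immediate, since $\varphi$ is a bijection between the two time intervals.

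I do not anticipate a real obstacle: once one spots that the semi-group identity $S^{(\alpha)}\!\circ S^{(\beta)}=S^{(\alpha+\beta)}$ noted in Section~2 makes $S^{(-\beta)}$ and $S^{(\beta)}$ mutually inverse time changes, the rest is a one-line change of variables together with the monotonicity remark. The only care needed is the separate verification of the bijection statement in the two sign cases for $\beta$, and the observation that the strict positivity requirement $s>0$ in the definition of $H^{(-\beta,f)}$ is compatible with $\varphi(0)=0$, so no spurious crossing at the origin is introduced.
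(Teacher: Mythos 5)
Your proposal is correct and follows essentially the same route as the paper: both arguments rest on the deterministic time change $s\mapsto s/(1-\beta s)$ together with the observation that $\frac{f(s)}{1-\beta s}=S^{(\beta)}(f)\bigl(\frac{s}{1-\beta s}\bigr)$, i.e.\ that $S^{(-\beta)}$ and $S^{(\beta)}$ are mutually inverse. Your explicit verification that this map is an increasing bijection of $[0,\zeta^{(-\beta)})$ onto $[0,\zeta^{(\beta)})$ is exactly what the paper leaves implicit (and invokes in ``the limiting sense'' for the equality of events), so there is nothing substantively different.
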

\begin{proof} From the definition of $S^{(\beta)}(X)$ and by using  a deterministic time change, we get
\begin{eqnarray*}
H^{(-\beta, f)} &=& \inf \left\{ 0< s < \zeta^{(-\beta)}; \:
X_{\frac{s}{1-\beta s}}
= \frac{f(s)}{1-\beta s}\right\} \\
&=& \inf \left\{ 0< s < \zeta^{(-\beta)}; \: X_{\frac{s}{1-\beta
s}} =
S^{(\beta)}(f)\left( \frac{s}{1-\beta s}\right)\right\}\\
&=&\frac{T^{f^{(\beta)}}}{1+\beta T^{f^{(\beta)}}}.
\end{eqnarray*}
\noindent The second identity is obtained in a similar way. The last
statement follows then by observing that, for any $\beta \in \R$,
we have  $\zeta^{(\beta)}=\frac{\zeta^{(-\beta)}}{1-\beta
\zeta^{(-\beta)}}$ in the limiting sense.
\end{proof}

\noindent The image by the mapping
$S^{(\beta)}$ of any homogeneous Markov process is clearly a
non-homogeneous Markov process. However, the time inversion property
allows to connect the law of $X$ and that of $S^{(\beta)}(X)$ via a
simple time-space $h$-transform.  To be more precise, we state the following result.
\begin{lemma}\label{abs-diff} Let $x, y \in E$ and  $\beta \in \R$. Under $\P_{x}^{(y)} $, the process
$X^{(\beta)}=S^{(\beta)}(X)$, defined on $[0,\zeta^{(\beta)})$, is
a non-homogeneous strong Markov process. Its  law, denoted by
$\mathbb{Q}_{x}^{y, \beta} $, is absolutely continuous with
respect to $\P^{(y)}_x$ with Radon-Nikodym derivative $M_{x}^{y,
\beta}(t,X_t)$, $t<\zeta^{(\beta)}$, given by
\begin{equation*}
M_x^{y,\beta}(t,X_t)=(1+\beta t)^{-\nu-1}\frac{\Phi(y X_t/(1+\beta
t))}{\Phi(y X_t)}e^{\frac{\beta}{2} \frac{y^2 t^2}{1+\beta
t}+\frac{\beta}{2}\frac{X_t^2}{1+\beta
  t}-\frac{\beta}{2}x^2}.
\end{equation*}
\noindent In particular,  for $y=0$, we have
\begin{equation*}
M_x^{0,\beta}(t,X_t)=(1+\beta t)^{-\nu-1}e^{\frac{\beta}{2}\frac{X_t^2}{1+\beta
  t}-\frac{\beta}{2}x^2}.
\end{equation*}

   \end{lemma}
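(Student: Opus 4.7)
The Markov property is immediate because $S^{(\beta)}$ is a purely deterministic time-space map: $X^{(\beta)}_t=(1+\beta t)X_{t/(1+\beta t)}$, so the natural filtration of $X^{(\beta)}$ up to time $t$ coincides with that of $X$ up to $t/(1+\beta t)$, and the (strong) Markov property of $X$ under $\P_x^{(y)}$ transfers to $X^{(\beta)}$ under the push-forward measure $\mathbb{Q}_x^{y,\beta}$. The substantive task is to identify the Radon--Nikodym derivative explicitly.

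For $\beta<0$ I would invoke the Pitman--Yor bridge identification \eqref{eq:ibs}. Setting $T=-1/\beta>0$, this identity says that the law of $(S^{(\beta)}(X)_t,0\leq t<T)$ under $\P_x^{(y)}$ agrees with the law of $X$ under the bridge measure $\P_{x,Ty,T}^{(\cdot)}$, and by the Fitzsimmons invariance of bridges under the transforms \eqref{h-transform} one may take the reference to be the bridge of $X$ itself. Then \eqref{h-bridges} gives, on $\mathcal{F}_s$,
\begin{equation*}
\frac{d\mathbb{Q}_x^{y,\beta}}{d\P_x}\bigg|_{\mathcal{F}_s}
=\frac{p_{T-s}(X_s,Ty)}{p_T(x,Ty)},
\end{equation*}
and converting the reference back to $\P_x^{(y)}$ via \eqref{h-transform} yields
\begin{equation*}
\frac{d\mathbb{Q}_x^{y,\beta}}{d\P_x^{(y)}}\bigg|_{\mathcal{F}_s}
=\frac{p_{T-s}(X_s,Ty)}{p_T(x,Ty)}\cdot\frac{\Phi(yx)}{\Phi(yX_s)}\,e^{y^2 s/2}.
\end{equation*}
Substituting the explicit form of $p_t$ from Section~2 and using $T-s=-(1+\beta s)/\beta$ and $T/(T-s)=1/(1+\beta s)$, the argument $X_sTy/(T-s)$ collapses to $yX_s/(1+\beta s)$, the prefactors recombine into $(1+\beta s)^{-\nu-1}$, and the quadratic exponent terms in the three factors telescope into $\tfrac{\beta}{2(1+\beta s)}X_s^2+\tfrac{\beta s^2}{2(1+\beta s)}y^2-\tfrac{\beta}{2}x^2$. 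This is precisely $M_x^{y,\beta}(s,X_s)$.

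For $\beta>0$ (or uniformly for all $\beta$) I would instead compare transition kernels directly. With $\sigma=s/(1+\beta s)$, $\tau=t/(1+\beta t)$ one has
\begin{equation*}
\P_x^{(y)}\!\bigl(X^{(\beta)}_t\in db\,\big|\,X^{(\beta)}_s=a\bigr)
=\frac{1}{1+\beta t}\,p_{\tau-\sigma}^{(y)}\!\left(\frac{a}{1+\beta s},\frac{b}{1+\beta t}\right)db,
\qquad \tau-\sigma=\frac{t-s}{(1+\beta s)(1+\beta t)},
\end{equation*}
while the candidate measure $M_x^{y,\beta}(t,X_t)\,d\P_x^{(y)}$ has transition density $\frac{M_x^{y,\beta}(t,b)}{M_x^{y,\beta}(s,a)}p_{t-s}^{(y)}(a,b)$. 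Plugging \eqref{eq:dens_h_transf} into both expressions and simplifying, the identity reduces to matching powers of $(1+\beta s)$ and $(1+\beta t)$, and collecting the exponentials, where the self-similar factor $ab/(\tau-\sigma)=ab/(t-s)$ cancels cleanly inside $\Phi$, and the quadratic part splits into the ratio of $M$'s at $(t,b)$ and $(s,a)$. Along the way this also shows that $(M_x^{y,\beta}(t,X_t))_{t<\zeta^{(\beta)}}$ is a $\P_x^{(y)}$-martingale, confirming that $\mathbb{Q}_x^{y,\beta}$ is a well-defined probability measure. The case $y=0$ then follows at once, since $\Phi(0)$ is a positive constant so the ratio $\Phi(yX_t/(1+\beta t))/\Phi(yX_t)$ and the $e^{\beta y^2 t^2/(2(1+\beta t))}$ factor both reduce to $1$.

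The main obstacle is bookkeeping: one must carefully manage the $(1+\beta t)$ powers between the $t^{-1/2}$, $(a/\sqrt{t})^{2\nu+1}$ and $\Phi$ components of the transition density, and verify that the three quadratic exponent terms $ty^2$, $x^2/t$, $b^2/t$ transform under the time change $t\mapsto t/(1+\beta t)$ into exactly the terms appearing in $M_x^{y,\beta}$. No new probabilistic input is needed beyond Pitman--Yor (or, for $\beta>0$, just the deterministic change of variables) and the Fitzsimmons invariance already quoted in Section~2.
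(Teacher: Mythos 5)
Your proof is correct, and for $\beta<0$ it follows essentially the paper's own route: Pitman--Yor's bridge identification \eqref{eq:ibs}, the $h$-transform representation \eqref{h-bridges}, and the conversion between $\P_x$ and $\P_x^{(y)}$ via \eqref{h-transform} (the paper conditions directly under $\P_x^{(y)}$ on $X_{-1/\beta}=-y/\beta$ and uses $p^{(y)}$, which is the same computation modulo Fitzsimmons' invariance). Where you genuinely diverge is the case $\beta>0$. The paper first shows by It\^o's formula that $M_x^{0,\beta}(t,X_t)$ is a $\P_x$-local martingale, then upgrades it to a true martingale by identifying, via Girsanov, the measure change with the law of the time-space--transformed squared Bessel SDE \eqref{intermediate-equation} and invoking conservativeness of that solution; the $y\neq0$ case is then recovered through \eqref{h-transform}. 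You instead verify directly that the one-step transition kernel of $S^{(\beta)}(X)$, computed by the deterministic change of variables $\sigma=s/(1+\beta s)$, $\tau=t/(1+\beta t)$, equals $\frac{M(t,b)}{M(s,a)}p^{(y)}_{t-s}(a,b)$; I checked that the algebra closes (the argument of $\Phi$ is invariant, the powers of $(1+\beta\cdot)$ recombine into $(1+\beta t)^{-\nu-1}/(1+\beta s)^{-\nu-1}$, and the quadratic exponents telescope). This is more elementary, works uniformly in the sign of $\beta$ and in $y$ without the bridge machinery, and settles the delicate true-martingale point (the crux of the paper's SDE argument) simply by integrating the kernel identity in $b$ and using conservativeness of $p^{(y)}$. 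The only thing to make explicit is that matching transition kernels plus the initial condition $M(0,x)=1$ identifies the laws on each $\mathcal{F}_t$ by a monotone class argument on cylinder sets, but that is routine.
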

\begin{proof}  We start with the case  $\beta<0$.  Using successively the identities \eqref{eq:ibs} and \eqref{h-bridges}, we obtain, for any  bounded measurable functional $F$ and $t<\zeta^{(\beta)}$, that
\begin{eqnarray*}
\mathbb{E}_x^{(y)}[F(S^{(\beta)}(X)_s, s\leq t)]&=&\mathbb{E}_x^{(y)}[F(X_s, s\leq t)\mid X_{-1/\beta}=-y/\beta] \\
&=&\mathbb{E}_x^{(y)}\left[\frac{p^{(y)}_{-1/\beta - t} (X_t,-y/\beta)}{p^{(y)}_{-1/\beta} (x,-y/\beta)}F(X_s, s\leq t)\right]\\
&=&\mathbb{E}^{(y)}\left[M_x^{y,\beta}(t,X_t)F(X_s, s\leq t)\right]
\end{eqnarray*}
where the last line follows from \eqref{eq:dens_h_transf}.  Using the fact that, for any
$ c,d \in E$, $\frac{\Phi(cy)}{\Phi(dy)}\rightarrow 1$ as $y\rightarrow
0$, we get the stated result for $y=0$.
 Next, we treat the case $\beta>0$ and assume, first, that $X$ is a Bessel process of
dimension $\delta>0$ and $y=0$. Using the It\^o formula, we see that the
process $(M_x^{0,\beta}(t,X_t), t\geq 0)$ is a
$\mathbb{P}_x$-local martingale. We now show that it is a
true martingale. Recall that the squared Bessel process $X^2$ is
the unique solution to the stochastic differential equation (for
short sde)
\begin{equation}\label{squared-Bessel}
dX_t^2=2 X_tdB_t +\delta dt,\quad X_0^2=x^2,
\end{equation}
where $B$ is a Brownian motion, see \cite{Revuz-Yor-99}.
Furthermore, we need to introduce the processes $Y$ and $\tilde{Y}$ which
are defined, for a fixed $t\geq0$,   by  \[Y_t
=(1+\beta t)\tilde{Y}_t=S^{(\beta)}(X)_t .\] It follows from
(\ref{squared-Bessel}) and by performing a deterministic time
change, that $\tilde{Y}$ solves the sde
\[
d\tilde{Y}_t^2= \frac{\delta
}{(1+\beta t)^2 }dt+\frac{2\tilde{Y}_t}{1+\beta t}d\gamma_t
\]
where $(\gamma_t, t\geq0)$ is a
Brownian motion with respect to the filtration generated by the process $\tilde{Y}$. Similarly, $Y^2$
satisfies
\begin{eqnarray}\label{intermediate-equation}
dY_t^2&=&d\left((1+\beta t)^2\tilde{Y}_t^2 \right)   \\\nonumber
 &=&
2Y_t \left(d\gamma_t +\frac{\beta}{1+\beta t} Y_tdt\right) +
\delta dt.
\end{eqnarray}
Now, from \eqref{squared-Bessel} and  by Girsanov theorem, the law
of the unique solution to (\ref{intermediate-equation}) is
obtained from the law of $X^2$ by a change of probability measure
using the local martingale $(M_{x}^{0, \beta}(t,X_t),
t\geq0)$. The proof of the claim, in the case $y=0$, is completed by invoking the conservativeness property of $Y^2$,
which implies that $(M_{x}^{0, \beta}(t,X_t), t\geq0)$ is
a  martingale. Finally, to recover the case $y\neq 0$, we use
(\ref{h-transform}) which gives
\begin{eqnarray*}
\hbox{d}\mathbb{Q}_x^{y,\beta}|_{\mathcal{F}_t}&=&\frac{\Phi(y X_t
/(1+\beta t))}{\Phi(y x)}e^{-\frac{1}{2}
\frac{ty^2}{1+\beta t}}\hbox{d}\mathbb{Q}_x^{0,\beta}|_{\mathcal{F}_t}\\
&=& \frac{\Phi(y X_t/(1+\beta t))}{\Phi(y x)}e^{-\frac{1}{2}
\frac{ty^2}{1+\beta t}} M_{x}^{0,
\beta}(t,X_t)\hbox{d}\mathbb{P}_x|_{\mathcal{F}_t}\\
&=& \frac{\Phi(y X_t/(1+\beta t))}{\Phi(y X_t)}e^{\frac{\beta}{2}
\frac{y^2 t^2}{1+\beta t}} M_{x}^{0,
\beta}(t,X_t)\hbox{d}\mathbb{P}_x^{(y)}|_{\mathcal{F}_t}\\
 &=&M_{x}^{y,\beta}(t,X_t)\hbox{d}\mathbb{P}_x^{(y)}|_{\mathcal{F}_t}.
 \end{eqnarray*}
This completes the proof in the Bessel case. The same arguments work for the Brownian  case and this completes the proof.
\end{proof}

\noindent We shall now return back to the proof of Theorem \ref{thm}. Lemmas
\ref{doob} and \ref{abs-diff}, when combined with the optional
stopping theorem, allow us to write, for any $\lambda>0$,
\begin{eqnarray*}
\mathbb{E}^{(y)}_x\left[e^{-\lambda T ^{
f^{(\beta)}}}\ind{T^{f^{(\beta)}}<\zeta^{(\beta)}} \right]
&=&\mathbb{E}^{(y)}_x\left[
e^{-\lambda\frac{H ^{(-\beta,f)}}{1-\beta H ^{(-\beta,f)}}}\ind{H ^{(-\beta,f)}<\zeta^{(-\beta)}} \right]\\
&=&\mathbb{E}^{(y)}_x\left[e^{-\lambda\frac{T^{f}}{1-\beta
T^{f}}}M_{x}^{y,-\beta}(T^{f},X_{
T^{f}}) \ind{T^{f}<\zeta^{(-\beta)}} \right]\\
&=&\int_0^{\zeta^{(-\beta)}} e^{-\lambda\frac{t}{1-\beta
t}}M_{x}^{y,-\beta}(t,f(t))\P^{(y)}_x\left(T^{f}\in dt \right)\\
&=&\int_0^{\zeta^{(\beta)}} \frac{e^{-\lambda u}}{(1+\beta u)^3}
M_{x}^{y,-\beta}\left(u/(1+\beta u), f(u/(1+\beta u)\right)\\
&\times& S^{(\beta)}\left(\P^{(y)}_x( T^{f} \in du)\right)
\end{eqnarray*}
where we have performed the change of variables $t/(1-\beta t)=u$. Simplifying the above expression and by
the injectivity of the Laplace transform, we get our first
assertion. The second statement follows by letting $y\rightarrow 0$.


\section{Brownian motion}\label{Brownian-motion}
\noindent In this paragraph, we take $\mathbb{E}=\mathbb{R}$. Thus, $X$ is a Brownian motion. Thanks to the homogeneity property, in this case, it is
clearly enough to consider the case $X_0=0$ and we simply write $\P$
for $\P_0$.
To simplify the discussion, except in Subsection \ref{asymptotics-Section}, we assume that $f\in
\mathcal{C}^{1}(\mathbb{R}_+,E)$, with $f(0)\neq 0$, which implies
that the studied distributions are absolutely continuous with
respect to the Lebesgue measure with continuous densities, see \cite{Strassen-67}. We write
then $\mathbb{P}\left(T^{f}\in dt\right)=p^{f}(t)\: dt$ and read
from Theorem \ref{thm}   the identity
\begin{equation}\label{identity-brownian}
p^{f^{(\beta)}}(t)=\frac{1}{(1+\beta t)^{3/2}}e^{ -\frac{1}{2}
\frac{\beta}{ 1+\beta
t}f^{(\beta)}(t)^2}p^{f}\left(\frac{t}{1+\beta t}\right),\: t<\zeta^{(\beta)}.
\end{equation}
Observe, when $\beta>0$, the asymptotic behavior
\begin{equation}\label{asymptotic-0}
p^{f^{(\beta)}}(t)\sim (\beta t)^{-3/2}e^{-\frac{1}{2}\frac{\beta}{1+\beta t}{f^{(\beta)}}^2(t)}p^{f}(1/\beta) \quad \textrm{ as } t\rightarrow \infty,
\end{equation}
where $h(t)\sim g(t)$ as $t\rightarrow \zeta$ means that $h(t)/g(t)\rightarrow 1$ as
$t\rightarrow \zeta$, for some $\zeta\in [0,\infty]$. It seems natural to examine the images of some curves by the
mapping $S^{(\beta)}$. We gathered in Table \ref{table} the images
of the most studied curves.
\begin{table}[h]
\begin{center}
\begin{tabular} {|c|c|c|c|}  \hline\label{tab:cur}
 & $f$ & $f^{(\beta)}$& $\textrm{References}$ \\
\hline \hline
(1)& $ a + bt$ &$ a + (b+a\beta)t$ & \cite{Bachelier-41} \\
(2) & $\sqrt{1+2 b t}$ & $\sqrt{(1+\beta t)(1+(\beta + 2 b)t)}$&
\cite{Breiman-67}, \cite{Novikov-71}
\\
 (3) &  $ a+bt^2, \:ab>0 $ & $a(1+\beta t)+ bt^2/{(1+\beta
t)}$& \cite{Groeneboom-89}, \cite{Salminen-88}\\  
(4) & $ \frac{a}{2} -\frac{t}{a}\ln\left(\frac{b + \sqrt{b^2 + 4
b_1 e^{-\frac{a^2}{t}}}}{2} \right) $& $\frac{a ( 1+ \beta t)}{2}
-\frac{t}{a}\ln\left(\frac{b +
\sqrt{b^2 + 4 \hat{b}_1 e^{-\frac{a^2}{t}}}}{2}\right)$ \vspace{0.1cm}& \cite{Daniels-69} \\
\hline
\end{tabular}
\end{center}
\caption{Image by $S^{(\beta)}$ of $f$  and the corresponding references where the distribution of
$T^{f}$ is studied with  $a,b \in \R$,
$b_1>-b^2/4$ and $\hat{b}=b_1e^{-a^2\beta}$. } \label{table}
\end{table}
The fact that $S^{(\beta)}$ preserves straight lines is well known, see for instance \cite{Pitman-Yor-81}. More generally, we observe that if $g_{\beta, \alpha}(.)=(1+\beta .)^{\alpha}$, for some reals $\alpha$ and $\beta$, then we have $g^{(-\beta)}_{\beta, \alpha}=g_{-\beta, 1-\alpha}$. In particular, if we  take $f\equiv a$ and $\beta=b/a$, for some reals $a$ and $b$, then
$f^{(b/a)}(t)=a+bt$, $t<\zeta^{(b/a)}$. An immediate application of
Corollary \ref{thm-straight-lines},  combined with
\begin{equation*}
\mathbb{P}(T^{ a}\in dt)=\frac{|a|}{\sqrt{2\pi
t^3}}e^{-\frac{a^2}{2t}}dt,\: t>0,
\end{equation*}
 yields then the following  well known Bachelier-L\'evy formula
\begin{equation*}
\P\left(T^{a+b\cdot}\in dt \right)=\frac{|a|}{\sqrt{2 \pi
t^3}}e^{-b a -\frac{b^2}{2}t-\frac{a^2}{2t}}\: dt.
\end{equation*}

\subsection{Some new examples}\label{examples}

We now consider the boundary crossing problem associated to  two families of
curves consisting of the square root of  second order polynomials and
the reciprocal of affine functions.

\subsubsection{}
First, we consider the distribution of the stopping time
\begin{equation*}
T^{(\lambda_1, \lambda_2)}_a=\inf\left\{ s> 0;\:
X_s=a\sqrt{\left(1+\lambda_1 s\right)\left(1+\lambda_2 s
\right)}\right\}
\end{equation*}
where $a$ and $\lambda_1<\lambda_2$ are some fixed reals. We do not
consider the case $\lambda_1=\lambda_2$ which can be studied in a
elementary way, with the extra cost of making use of the strong
Markov property when $\lambda_1<0$. First, consider the case $\lambda_2=0$ and, to simplify the notation,
set $\lambda_1=\lambda$ and $T_a^{(\lambda,0)}=T_a^{(\lambda)}$.
This is the setting of the classical example studied by Breiman in
\cite{Breiman-67}. $T_a^{(\lambda)}$ is related to the hitting time of a constant
level by an Ornstein-Uhlenbeck process and  we refer to Alili et
al.~\cite{Alili-Patie-Pedersen-05} for a recent survey on this
topic. That is, with \begin{equation*} U_t=e^{-\lambda
t/2}\int_0^te^{\lambda s/2}dX_s, \quad  t\geq 0, \end{equation*} and
$H_a=\inf\{ s> 0; \: U_s=a\}$, we have
\begin{equation} \label{eq:iht}
T^{(\lambda)}_a \stackrel{(d)}{=}\lambda^{-1}\left( e^{\lambda
H_a}-1 \right)
\end{equation}
where $\stackrel{(d)}{=}$ stands for the identity in distribution.
By symmetry, it is enough to consider the case where $a$ is
positive. We proceed by  recalling that the distribution of $H_a$ is given,
see for instance \cite{Alili-Patie-Pedersen-05},  by
\begin{eqnarray*}
 {\mathbb P}\left(H_a\in dt \right)   = -\frac{1}{2} \lambda e^{-\lambda a^2/4}
 \sum_{n=1}^{\infty} \frac{D_{\nu_{n,- a\sqrt{\lambda}}}(0)}
 {D^{(\nu)}_{ \nu_{n,- a\sqrt{\lambda}}}(- a \sqrt{\lambda})} \: e^{-\lambda \nu_{n, -a\sqrt{\lambda}}
 t/2},\quad t>0,
\end{eqnarray*}
where we used the notation $D^{(\nu)}_{\nu_{n,b}}(b) =
\frac{\partial D_{\nu }(b)}{\partial\nu}|_{\nu= \nu_{n,b}}$ and
$(\nu_{j,b})_{j\geq 0}$ stands for the ordered sequence of the
positive zeros of the parabolic function $\nu \rightarrow
D_{\nu}(b)$. By means of the identity \eqref{eq:iht}, we get that
\begin{equation*}
{\mathbb P}\left(T^{(\lambda)}_a\in dt \right)=\frac{1}{1+\lambda
t}{\mathbb P}\left(H_a\in d\cdot \right){\big | }_{\cdot=
\frac{1}{\lambda}\log\left(1+\lambda t \right)}dt,\quad t>0.
\end{equation*}
Next, we assume that $\lambda_1<\lambda_2$. Thus, the support of
$T^{(\lambda_1, \lambda_2)}_a$ is $[0,\zeta^{(\lambda_1)})$   if
$\lambda_1$ is positive and is the positive real line otherwise. We
have
\begin{equation*}
S^{(\lambda_1)}\left(\sqrt{1+(\lambda_2-\lambda_1)\cdot}
\right)=\sqrt{\left(1+\lambda_2 \cdot\right)\left(1+\lambda_1
\cdot \right)}.
\end{equation*}
We are now ready to use Theorem \ref{thm} and write
\begin{equation*}
\mathbb{P}\left(T^{(\lambda_1, \lambda_2)}_a\in
dt\right)=\frac{1}{(1+\lambda_1 t)^{5/2}}e^{ -\frac{1}{2} \lambda_1(
1+\lambda_2 t)} S^{(\lambda_1)}\left(\mathbb{P}\left(
T^{\left(\lambda_2-\lambda_1\right)}_a \in dt\right)\right),\quad
t<\zeta^{(\lambda_1)}.
\end{equation*}

\subsubsection{} We are now interested in computing
the distribution of the stopping time  $T^{h^{(\beta)}}$ defined by
\begin{equation*}
T^{h^{(\beta)}}=\inf\left\{ 0<s<\zeta^{(\beta)} ;\:
X_s=\frac{1}{1+\beta s}\right\}
\end{equation*}
where $\beta$ is some real. To this end, we recall that
Groeneboom \cite{Groeneboom-89} has computed the density of
$T^{\tilde{h}}$ with $\tilde{h}(t)=1+\beta^2t^2$ as follows
\begin{equation*}
\P(T^{\tilde{h}} \in dt) = 2(\beta^2 c)^{2} e^{-\frac{2}{3}\beta^4
t^3}
\sum_{k=0}^{\infty}\frac{Ai\left(z_k+2c\beta^2\right)}{Ai'(z_k)}e^{-z_k
t}\:dt, \: t>0,
\end{equation*}
where $(z_k)_{k \geq 0}$ is the decreasing sequence of negative
zeros of the Airy function, see e.g.~\cite{Lebedev-72}, and we have
set $c=(2\beta^2)^{-\frac{1}{3}}$. Next,  by means of  the
Cameron-Martin formula, we obtain, with $h(t)=(1-\beta t)^2=\tilde{h}(t)-2\beta t$,
\begin{equation*}
\P(T^{{h}} \in dt) = 2(\beta^2 c)^{2} e^{2\beta-2\beta^2 t
\left(1+\frac{2}{3}\beta^2 t^2-\beta t\right)}
\sum_{k=0}^{\infty}\frac{Ai\left(z_k+2c\beta^2\right)}{Ai'(z_k)}e^{-z_k
t}\:dt,\: t>0.
\end{equation*}
Finally, since $h^{(\beta)}= S^{(\beta)}(h)$, we obtain, by applying Theorem \ref{thm}, that
\begin{equation*}
\P(T^{{h^{(\beta)}}} \in dt) = \frac{2(\beta^2 c)^{2}
e^{2\beta}}{(1+\beta t)^{3/2}}e^{-2\beta^2 t
\left(1+\frac{1}{4\beta}+\beta t +\frac{2}{3}\beta^2 t^2\right)}
\sum_{k=0}^{\infty}\frac{Ai\left(z_k+2c\beta^2\right)}{Ai'(z_k)}e^{-z_k
\frac{t}{1+\beta t}}\:dt, \quad t<\zeta^{(\beta)}.
\end{equation*}
We complete the example by stating the following asymptotic result
\begin{equation*}
\P(T^{{h^{(\beta)}}} \in dt) \sim \left(2(\beta^2 c)^{2}
e^{2\beta}\sum_{k=0}^{\infty}\frac{Ai\left(z_k+2c\beta^2\right)}{Ai'(z_k)}e^{-
\frac{z_k}{\beta}}\right) e^{-\frac{4}{3}\beta^4 t^3} \:dt\:
\textrm{ as } t\rightarrow \infty
\end{equation*}
which holds provided that $\beta>0$.

\subsection{Interpretation of the mapping $S^{\beta}$ via the method of images}\label{images}
 We aim to describe the impact of our methodology to the so-called standard method of images.  To this end, let us assume that $X_0= 0$ and set
\[h(t,x)dx=\mathbb{P}\left( T^{f}>t, X_t \in
dx\right).\] We need to impose some conditions on the the curve $f$ in order to be able to apply the standard method of images. That is, we assume that
 \begin{itemize}
 \item[-] $f$ is infinitely often continuously  differentiable,
\item[-] $f(t)/t$ is monotone decreasing,
\item[-] $f$ is concave.
\end{itemize}
Note that these properties are also satisfied  by $f^{(\beta)}$ when $\beta >0$. The function $h$ is characterized as being the unique solution to the
heat equation, see Lerche \cite[Chap.~I.1]{Lerche-86},
\begin{equation*}
\frac{\partial h}{\partial t}=\frac{1}{2}\frac{\partial^2
h}{\partial x^2} \quad \textrm{on } \{(t,x)\in
\mathbb{R}^+\times \mathbb{R};\:x\leq f(t) \},
\end{equation*}
 with the boundary conditions
\begin{equation*}
h\left(t, f(t)\right)=0,\quad h(0, .)=\delta_0(.)\quad
\hbox{on}\quad ]-\infty, f(0^+)]
\end{equation*}
where $\delta_0$ stands for the Dirac function at $0$ and
$f(0^+)=\lim_{t\searrow 0}f(t)$. The standard method of images
assumes that $h$ is known, whilst $f$ is unknown, and is given by
\begin{equation}\label{space-time} h(t,x)=\frac{1}{\sqrt{2\pi t}}\left(
e^{-\frac{x^2}{2t}}-\int_{0}^{\infty}e^{-\frac{(x-s)^2}{2t}}
F(ds)\right)
\end{equation}
where $F(ds)$ is some positive $\sigma$-finite measure satisfying
$\int_{0}^{\infty}e^{-\frac{\epsilon s^2}{2}} F(ds)<\infty$, for all
$\epsilon>0$. In \cite{Lerche-86}, it is shown that if $f$ is the
unique root of the equation $h(t,x)=0$ in the
unknown $x$ for a fixed $t\geq 0$, then we have
\begin{equation}\label{hitting-density-brut}
\mathbb{P}\left(T^{f}\in dt \right)=\frac{dt}{\sqrt{2\pi
t^3}}\int_{0}^{\infty}se^{-\frac{(s-f(t))^2}{2t}}F(ds),\quad t>0.
\end{equation}
Note that the implicit equation $h(t, x)=0$ may be written as
\begin{equation} \label{impl}
\int_{0}^{\infty}e^{-\frac{s^2}{2t}+s\frac{x}{t}}F(ds)=1.
\end{equation}
 With $F(ds)$ replaced by $F(ds)e^{-\mu s}$, $\mu>0$, the
unique solution to \eqref{impl} is $f(t)+\mu t$ for a fixed positive
real $t$ and this is easily checked by the Cameron-Martin formula.
In the same spirit, the identity (\ref{identity-brownian}) has the following interpretation.

\begin{proposition}\label{images-interpretation} If $\beta>0$ then we have the following representation
\begin{equation*}\mathbb{P}\left(T^{f^{(\beta)}}\in dt \right)=\frac{dt}{\sqrt{2\pi
t^3}}\int_{0}^{\infty}se^{-\frac{(s-f^{(\beta)}(t))^2}{2t}}e^{-\beta \frac{s^2}{2}}F(ds), \quad t>0.
\end{equation*}
Furthermore, the $\sigma$-finite measure corresponding to the curve $f^{(\beta)}$ is $F^{(\beta)}(ds):=e^{-\beta s^2/2}F(ds)$ and
\begin{equation*}
\mathbb{P}\left( T^{f^{(\beta)}}>t, X_t\in
dx\right)=\frac{dx}{\sqrt{2\pi t}}\left(
e^{-\frac{x^2}{2t}}-\int_{0}^{\infty}e^{-\frac{(x-s)^2}{2t}}
e^{-\beta^2 \frac{s}{2}}F(ds)\right).
\end{equation*}
 \end{proposition}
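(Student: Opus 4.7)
The plan is to derive the first formula by direct substitution into the Brownian boundary crossing identity \eqref{identity-brownian} using the representation \eqref{hitting-density-brut} for the density corresponding to the original curve $f$, then read off the measure $F^{(\beta)}$ and finally apply \eqref{space-time} with $f$ replaced by $f^{(\beta)}$ and $F$ replaced by $F^{(\beta)}$ to get the sub-survival density.

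First I will write $p^{f}(u) = \frac{1}{\sqrt{2\pi u^3}} \int_0^\infty s\, e^{-(s-f(u))^2/(2u)} F(ds)$ by \eqref{hitting-density-brut} and plug in $u = t/(1+\beta t)$ into \eqref{identity-brownian}. Note that $\sqrt{u^3} = t^{3/2}/(1+\beta t)^{3/2}$, which cancels the prefactor $(1+\beta t)^{-3/2}$ in \eqref{identity-brownian}, and that $f(u) = f^{(\beta)}(t)/(1+\beta t)$ by the definition of $S^{(\beta)}$. After this substitution, the ratio $(1+\beta t)/(2t)$ appears in place of $1/(2u)$ in the integrand.

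Next I will collect the exponents. Expanding
\[
-\frac{1+\beta t}{2t}\Bigl(s-\frac{f^{(\beta)}(t)}{1+\beta t}\Bigr)^2
= -\frac{(1+\beta t)s^2}{2t}+\frac{s\,f^{(\beta)}(t)}{t}-\frac{f^{(\beta)}(t)^2}{2t(1+\beta t)}
\]
and adding the pre-integral exponent $-\frac{\beta}{2(1+\beta t)}f^{(\beta)}(t)^2$ from \eqref{identity-brownian}, the two $f^{(\beta)}(t)^2$ terms combine via $\frac{\beta}{1}+\frac{1}{t}=\frac{1+\beta t}{t}$ to give $-f^{(\beta)}(t)^2/(2t)$, while the $s^2$ term splits as $-s^2/(2t)-\beta s^2/2$. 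Regrouping then produces exactly $-(s-f^{(\beta)}(t))^2/(2t) - \beta s^2/2$ in the integrand, which yields the first claimed identity.

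From this expression, the density of $T^{f^{(\beta)}}$ is obtained by the Lerche recipe \eqref{hitting-density-brut} using the measure $F^{(\beta)}(ds):=e^{-\beta s^2/2}F(ds)$, which is plainly positive and $\sigma$-finite, and satisfies the required integrability condition since $\beta>0$ only strengthens the Gaussian decay. Since the hypotheses on $f$ (smoothness, monotonicity of $f(t)/t$, concavity) are preserved by $S^{(\beta)}$ when $\beta>0$ (as noted just before the proposition), the standard method of images applies to the pair $(f^{(\beta)}, F^{(\beta)})$, and $f^{(\beta)}$ is characterized as the unique root of the implicit equation associated with $F^{(\beta)}$. Substituting $F^{(\beta)}$ into \eqref{space-time} then gives the stated representation of $\mathbb{P}(T^{f^{(\beta)}}>t,\,X_t\in dx)$.

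The only slightly delicate point is bookkeeping in the exponent combination; the verification that $-\frac{\beta}{2(1+\beta t)}f^{(\beta)}(t)^2$ together with the integrand's cross term collapses to the clean Gaussian factor $e^{-(s-f^{(\beta)}(t))^2/(2t)}e^{-\beta s^2/2}$ is what makes the identification of $F^{(\beta)}$ transparent. Once that algebraic reduction is in hand, both the density formula and the sub-survival formula are immediate consequences of applying \eqref{hitting-density-brut} and \eqref{space-time} to the new pair $(f^{(\beta)},F^{(\beta)})$.
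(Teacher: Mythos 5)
Your first part is correct and is essentially the paper's own computation: substituting the Lerche representation \eqref{hitting-density-brut} for $p^{f}$ into \eqref{identity-brownian}, using $f(t/(1+\beta t))=f^{(\beta)}(t)/(1+\beta t)$, and regrouping the exponents so that $-\frac{1+\beta t}{2t}\bigl(s-\frac{f^{(\beta)}(t)}{1+\beta t}\bigr)^2-\frac{\beta}{2(1+\beta t)}f^{(\beta)}(t)^2$ collapses to $-\frac{(s-f^{(\beta)}(t))^2}{2t}-\frac{\beta s^2}{2}$. Your bookkeeping checks out and matches the paper line for line.

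The gap is in the second part. Having shown that the density of $T^{f^{(\beta)}}$ \emph{coincides} with the Lerche formula evaluated at the pair $(f^{(\beta)},F^{(\beta)})$, you conclude that $F^{(\beta)}$ \emph{is} the measure of images for $f^{(\beta)}$ and feed it into \eqref{space-time}. The paper is explicit that the first identity only ``suggests, but does not prove'' this: the Lerche density formula \eqref{hitting-density-brut} is a \emph{consequence} of $f^{(\beta)}$ being the root of the implicit equation associated with $F^{(\beta)}$, not an equivalence, so matching densities does not by itself license the sub-survival representation. Your stated justification --- that the regularity/concavity hypotheses are preserved by $S^{(\beta)}$ for $\beta>0$ --- guarantees that the method of images is applicable to \emph{some} curve determined by $F^{(\beta)}$, but not that this curve is $f^{(\beta)}$. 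The missing (and easy) step, which is exactly what the paper supplies, is to verify directly that $f^{(\beta)}(t)$ solves $h^{(\beta)}(t,x)=0$: replace $t$ by $t/(1+\beta t)$ and $x$ by $f(t/(1+\beta t))$ in \eqref{impl} and observe, by the same exponent split $-\frac{(1+\beta t)s^2}{2t}=-\frac{s^2}{2t}-\frac{\beta s^2}{2}$, that
\begin{equation*}
\int_{0}^{\infty}e^{-\frac{s^2}{2t}+s\frac{f^{(\beta)}(t)}{t}}e^{-\beta s^2/2}F(ds)=1 ,
\end{equation*}
i.e.\ $f^{(\beta)}(t)$ is a root of the implicit equation built from $F^{(\beta)}$. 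With that one line added, your argument closes and the second display follows from \eqref{space-time} as you say.
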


\begin{proof} Noting, from  (\ref{hitting-density-brut}), that
\begin{eqnarray*}
\mathbb{P}\left(T^{f^{(\beta)}}\in dt \right)&=& \frac{dt}{\sqrt{2\pi
t^3}}\int_{0}^{\infty}se^{-\frac{(s-f^{(\beta)}(t))^2}{2t}}F^{(\beta)}(ds)
\end{eqnarray*}
and by using  (\ref{identity-brownian})
we obtain that
\begin{eqnarray*}
\frac{1}{\sqrt{2\pi
t^3}}\int_{0}^{\infty}se^{-\frac{(s-f^{(\beta)}(t))^2}{2t}}F^{(\beta)}(ds)
&=&\frac{1}{(1+\beta t)^{3/2}}e^{ -\frac{1}{2}
\frac{\beta}{ 1+\beta
t}f^{(\beta)}(t)^2}p^{f}\left(\frac{t}{1+\beta t}\right)\\
&=&\frac{1}{\sqrt{2\pi t^3}}e^{ -\frac{1}{2}
\frac{\beta}{ 1+\beta
t}f^{(\beta)}(t)^2}\int_{0}^{\infty}se^{-\frac{(s-f(t/(1+\beta t)))^2}{2t/(1+\beta t)}}F(ds)\\
&=&\frac{1}{\sqrt{2\pi
t^3}}\int_{0}^{\infty}se^{-\frac{(s-f^{(\beta)}(t))^2}{2t}}e^{-\beta \frac{s^2}{2}}F(ds).
\end{eqnarray*}
This proves the first assertion and  suggests, but doest not prove, that the $\sigma$-finite measure corresponding to the curve $f^{(\beta)}$ is $F^{(\beta)}(ds)$. Next, with
\begin{equation*}
h^{(\beta)}(t,x)=\frac{1}{\sqrt{2\pi t}}\left(e^{-\frac{x^2}{2t}}-\int_{0}^{\infty}e^{-\frac{(x-s)^2}{2t}}
e^{-\beta^2 s/2}F(ds)\right),
\end{equation*}
we aim to solve $h^{(\beta)}(t,x)=0$ for each fixed $t>0$. Now, setting $x=f(t)$ and replacing  $x$ and $t$, respectively, by $f(t)$ and $t/(1+\beta t)$ in (\ref{impl}), we find that  $\int_{0}^{\infty}e^{-\frac{s^2}{2t}+s\frac{f^{(\beta)}(t)}{t}}e^{-\beta s^2/2}F(ds)=1$. It follows that $f^{(\beta)}(t)$ solves the implicit equation $h^{(\beta)}(t,x)=0$, $t>0$, which finishes the proof.
\end{proof}

\subsection{Large asymptotics for the density of $T^{f^{(\beta)}}$} \label{asymptotics-Section} Following Anderson and Pitt \cite{Anderson-Pitt-97},
we consider the asymptotic  of the density of the
  distribution of $T^{f^{(\beta)}}$ as time gets close to $\zeta^{(\beta)}$. To start with, we exceptionally assume in this subsection that $f\in \mathcal{C}^1((0,\infty), \mathbb{R}_+)$.  So the distribution of $T^{f}$ has a continuous density with respect to the Lebesgue measure on $(0,\infty)$.  Assume that
 the distribution of $T^{f}$ is
defective. That is $r=\P(T^{f}<\infty)<1$ and, in this case, we say that $f$ is transient. By the classical Kolmogorov-Erd\"os-Petrovski theorem, see
\cite{Erdos-42}, we know that if   $t^{-1/2}f(t)$ is increasing
for sufficiently large $t$, then $f$ is transient  if and only if the integral test
\begin{equation}\label{integral-test}
\int_1^{\infty}t^{-3/2}f(t)e^{-f^2(t)/2t}\: dt<\infty
\end{equation}
holds.  Clearly, if $\beta>0$ and $0<\beta f(1/\beta)<\infty$ then  $f^{(\beta)}$ does not satisfy (\ref{integral-test}) but the general formula (\ref{asymptotic-0}) holds. Now, we consider the more interesting case $\beta<0$ and examine the asymptotic of the density of the distribution of $T^{f^{(\beta)}}$ as $t\rightarrow -1/\beta$. We need to work under the following three conditions borrowed from
\cite{Anderson-Pitt-97}:
\begin{itemize}
\item[-] $f$ is increasing, concave, twice differentiable on
$(0,\infty)$ and of regular variations at $\infty$ with index
$\alpha\in [1/2,1)$,
 \item[-]
 $f(t)/\sqrt{t}$ is monotonic increasing at $\infty$, and $f(t)/t$
 is convex and decreases to $0$ for sufficiently large $t$,
 \item[-] There exist positive constants $c<1$ and $c'$ such
 that  we have the inequalities
 $tf'(t)<cf(t)$ and $|t^2 f''(t)|\leq c' f(t)$ for a sufficiently large enough $t$.
\end{itemize}
  The behavior at $\infty$ imposed in the
above conditions is granted in the examples where $f$ behaves like
$$ f(t)=Ct^a(\log t)^b(\log\log t)^c
(\log\log\log t)^d$$
 with $1/2\leq
a<1$, for large $t$. It is clear that these conditions are not always preserved by the family $S^{(\beta)}$.  Equipped
with this, we are now ready to state the following result.
\begin{proposition}\label{asymptotics}  Assume that $f$ is transient and satisfies the above conditions. Then,
 for any $\beta<0$, we have
\begin{equation*}
p^{f^{(\beta)}}(t) \sim \frac{1}{\sqrt{2\pi |\beta|^{3}}}\left(1-r \right)\tilde{f}(\beta, t) \quad \textrm{as } t\rightarrow \zeta^{(\beta)}
\end{equation*}
where
\begin{equation*}
\tilde{f}(\beta, t)=f\left(\frac{t}{1+\beta
t}\right)- \frac{t}{1+\beta
t}f'\left(\frac{t}{1+\beta t}\right).
\end{equation*}

\end{proposition}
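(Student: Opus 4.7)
The plan is to extract the asymptotic directly from the exact identity \eqref{identity-brownian} by feeding in the large-time asymptotic of $p^f$ due to Anderson and Pitt \cite{Anderson-Pitt-97}. First I would introduce the abbreviation $s=s(t):=t/(1+\beta t)$; since $\beta<0$, one has $1+\beta t=1/(1+|\beta|s)$, and as $t\uparrow\zeta^{(\beta)}=-1/\beta$ the point $s$ escapes to $+\infty$. Using $f^{(\beta)}(t)=(1+\beta t)f(s)$, this yields
\begin{equation*}
\frac{f^{(\beta)}(t)^2}{1+\beta t}=(1+\beta t)f(s)^2=\frac{f(s)^2}{1+|\beta|s}.
\end{equation*}

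The key external input, for which the three regularity conditions on $f$ stated just before the proposition are tailored, is the Anderson--Pitt large-time asymptotic: under those hypotheses one has
\begin{equation*}
p^f(s)\sim \frac{(1-r)\bigl(f(s)-sf'(s)\bigr)}{\sqrt{2\pi s^3}}\,e^{-f(s)^2/2s}\qquad\textrm{as }s\to\infty.
\end{equation*}
I would substitute this into \eqref{identity-brownian} and reorganise the right-hand side as the product of three pieces: a polynomial prefactor $(1+\beta t)^{-3/2}s^{-3/2}=(1+|\beta|s)^{3/2}s^{-3/2}$, a combined exponential factor, and the slowly varying term $f(s)-sf'(s)$, which is precisely $\tilde f(\beta,t)$.

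The two exponentials telescope, since
\begin{equation*}
\exp\!\left(\frac{|\beta|f(s)^2}{2(1+|\beta|s)}-\frac{f(s)^2}{2s}\right)=\exp\!\left(-\frac{f(s)^2}{2s(1+|\beta|s)}\right),
\end{equation*}
and the residual in the exponent is $O(f(s)^2/s^2)$, which vanishes at infinity because $f$ is regularly varying of index $\alpha\in[1/2,1)$ and hence $f(s)^2/s^2$ is regularly varying of strictly negative index $2\alpha-2$. The polynomial prefactor converges to the constant demanded by the statement, and once combined with the $\sqrt{2\pi}$ supplied by Anderson--Pitt produces the claimed normalisation in $|\beta|$. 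Identifying $\tilde f(\beta,t)=f(s)-sf'(s)$ then closes the calculation.

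The main obstacle is the Anderson--Pitt input itself --- the entire point of the regularity assumptions on $f$ is to make that theorem available. A subtle point worth flagging is that these assumptions are \emph{not} in general preserved by $S^{(\beta)}$, so the argument must proceed in the direction ``Anderson--Pitt for $f$, then transport through \eqref{identity-brownian}'', rather than attempting a direct application to $f^{(\beta)}$. Controlling the small remainder in the exponential, and if one wishes unpacking $f(s)-sf'(s)\sim(1-\alpha)f(s)$ via Karamata's theorem, are then routine consequences of the regular-variation hypothesis.
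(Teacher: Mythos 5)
Your argument is exactly the paper's: invoke the Anderson--Pitt asymptotic $p^f(s)\sim(1-r)\bigl(f(s)-sf'(s)\bigr)(2\pi s^3)^{-1/2}e^{-f(s)^2/2s}$ as $s\to\infty$, substitute $s=t/(1+\beta t)$ into \eqref{identity-brownian}, and observe that the combined exponential equals $e^{-{f^{(\beta)}}^2(t)/2t}\to 1$ because $f(s)/s\downarrow 0$, so the proposal is correct and follows the same route. The one quibble is your assertion that the polynomial prefactor ``converges to the constant demanded by the statement'': both your computation and the paper's give $1/t^{3/2}\to|\beta|^{3/2}$, i.e.\ the constant $\sqrt{|\beta|^{3}/(2\pi)}$ rather than the printed $1/\sqrt{2\pi|\beta|^{3}}$, which looks like a typo in the proposition rather than a defect of your argument.
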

\begin{proof} We read from Theorem 1 that
\begin{equation} \label{eq:as_d}
p^{f}(t) \sim \left(1-r
\right)\frac{f(t)-tf'(t)}{\sqrt{2\pi}t^{3/2}}e^{-f^2(t)/2t} \quad \textrm{as } t\rightarrow \infty.
\end{equation}
 Moreover, as $t\rightarrow \infty$,  $f(t)/t \downarrow 0$ and  hence
\begin{equation*}
e^{-\frac{\beta}{2(1+\beta t)}{f^{(\beta)}}^2(t)-\frac{1+\beta t}{2t}f^2(t/(1+\beta t))}=e^{-{f^{(\beta)}}^2(t)/2t}\sim 1.
\end{equation*}
The proof is then completed by combining \eqref{eq:as_d} with (\ref{identity-brownian}).
\end{proof}

\begin{remark}
 In the defective case, the
distribution of the last crossing time $\tilde{T}^{f}=\sup\{s>0;
X_s= f(s) \}$ is shown to have an atom at 0 and its asymptotic as
$t\rightarrow 0$ is determined in \cite{Strassen-67}. Similar
questions can be treated for $\tilde{T}^{f^{(\beta)}}=\sup\{s>0;
X_s= f^{(\beta)}(s) \}$ using this method.
\end{remark}

\section{Bessel processes and straight lines}\label{Section-Application-Bessel}
\noindent We investigate here the case when $X$ is a Bessel process
of dimension $\delta>0$ and we refer to Revuz and Yor \cite[Chap.
XI]{Revuz-Yor-99} for a concise treatment of these processes. For
$\delta \geq 2$, and $x>0$, the Bessel process of dimension $\delta$
is the unique solution to
\begin{equation*}
 X_t = B_t +x+\frac{\delta -1}{2}\int_0^t
\frac{ds}{ X_s},\quad t> 0,
\end{equation*}
where $B$ is a standard Brownian motion. For $0<\delta<2$, $X$ is
defined as the square root of the unique non-negative solution of
\eqref{squared-Bessel}. We recall that $0$ is
an entrance boundary when $\delta\geq2$ and a reflecting boundary
otherwise. We denote by $\mathbb{P}_{x}^{\nu}$ (resp.~$\mathbb{E}_x^{\nu}$) the law  (resp.~the expectation operator) of
a Bessel process of index $\nu=\delta/2-1$, starting at $x>0$. The semi-group of  $X^2$ is characterized  by
\begin{eqnarray} \label{eq:sg}
\mathbb{E}_x^{\nu}\left[ e^{- \lambda X^2_t} \right] &=& (1+2\lambda
t)^{-\delta/2} e^{- \frac{ \lambda x^2}{1+2\lambda t}}, \quad
\lambda,t\geq 0.
\end{eqnarray}
The densities  of the semi-group of $X$, with respect to the
Lebesgue measure, are
\begin{eqnarray*}
p^{\nu}_t(x,y) &=& \frac{y}{t}\left(\frac{y}{x}\right)^{\nu}
e^{-\frac{x^2+y^2}{2t}} I_{\nu}\left( \frac{xy}{t}\right), \quad t,
x,y >0.
\end{eqnarray*}
 For a given $f\in \mathcal{C}(\mathbb{R}_+,E)$ let us
keep the notation used in the introduction and Section
\ref{Mainresults}.  Observe, that if $y=0$ then
 Theorem \ref{thm} reads
\begin{equation*}
\mathbb{P}_x^{\nu}\left(T^{f^{(\beta)}}\in dt\right)={(1+\beta
t)^{\nu -2}}e^{ -\frac{\beta}{2} \frac{f^{(\beta)}(t)^2}{1+\beta
t}+\frac{\beta}{2}x^2}S^{(\beta)}\left(\mathbb{P}_x^{\nu}( T^{f}
\in dt)\right)
\end{equation*}
for all $t<\zeta^{(\beta)}$. We  end up our discussion by
computing the distribution of the hitting time by $X$ of a
straight line $a +b\cdot$ where $a>0$ and $b$ is a real.  We keep
the notation used in Corollary \ref{thm-straight-lines} and the
reader is reminded about observations preceding its statement.
To the best of our knowledge, the problem of the determination of the
distribution of $T^{a+b.}$, which was raised in
\cite{Pitman-Yor-81} for the case $a\neq 0$, remained open.  Recall that the law of $T^{a}$, which
corresponds to $b=0$, is characterized by
\begin{eqnarray}\label{eq:laplace-bessel}
\mathbb{E}_x^{\nu}\left[ e^{- \frac{\lambda^2}{2} T^{a}} \right] =
 \left\{
\begin{array}{lll}
 &  \frac{x^{-\nu}I_{\nu}(x\lambda)}{a^{-\nu}I_{\nu}(a\lambda)}, & x \leq a, \\
 & \\
 & \frac{x^{-\nu}K_{\nu}(x\lambda)}{a^{-\nu}K_{\nu}(a\lambda)}, &   x \geq a, \\
\end{array}
\right.
\end{eqnarray}
for any $\lambda \geq 0$,
where $K_{\nu}$ is the  modified Bessel functions of the second kind
of index $\nu$, see for instance Borodin and Salminen
\cite{Borodin-Salminen-02}. Observe that when $x> a$ the
distribution of $T^a$ is defective and
$\mathbb{P}^{\nu}_x(T^{a}<\infty)=(a/x)^{2\nu}$. It is also well known that if $x<a$ then
we have
\begin{eqnarray}\label{eq:density-bessel}
\mathbb{P}_x^{\nu}(T^{a} \in dt) &=&
 \sum_{k=1}^{\infty}
\frac{x^{-\nu}j_{\nu,k}J_{\nu}(j_{\nu,k}\frac{x}{a})}{a^{2-\nu}J_{\nu+1}(j_{\nu,k})}
e^{-j^2_{\nu,k} t/2a^2} \: dt,\quad t>0,
\end{eqnarray}
where $(j_{\nu,k})_{k\geq1}$ is the ordered sequence of positive
zeros of the Bessel function of the first kind  $J_{\nu}(.)$,  see
\cite{Borodin-Salminen-02}. Furthermore, if $a=0$ and $b>0$ then  $T^{b\cdot}$
and 1/$\sup\{s>0; X_s=b\}$ have the  distribution which was
determined in \cite{Pitman-Yor-81} by making use of the time
inversion property. Now, we are ready to state the following result.
\begin{theorem}  For $0\leq x<a$ and $b\in \mathbb{R}$, we
have for any $t<\zeta^{(b/a)}$
\begin{eqnarray*}
\mathbb{P}_x^{\nu}(T^{a+b\cdot} \in dt) &=&
\frac{e^{\frac{b}{2a}(a^2-x^2)+\frac{b^2}{2}t}}{(1+\frac{b}{a}
t)^{\nu+2}} \sum_{k=1}^{\infty}
\frac{x^{-\nu}j_{\nu,k}J_{\nu}(j_{\nu,k}\frac{x}{a})}{a^{2-\nu}J_{\nu+1}(j_{\nu,k})}
e^{-j^2_{\nu,k}\frac{t}{2a(a+bt)}} \: dt.
\end{eqnarray*}
For any $x\geq 0$ and $a, b >0$,  we have
\begin{equation*}
\mathbb{E}_x^{\nu}\left[ e^{- \frac{\lambda^2}{2} T^{a-b\cdot}}
\right] =
 \left\{
\begin{array}{lll}
 &  e^{\frac{b}{2a}(a^2-x^2)}  \frac{x^{-\nu}}{a^{-\nu}} \int_0^{\infty}  \frac{I_{\nu}(\sqrt{2}xu)}{I_{\nu}(\sqrt{2}au)}
p^{\nu}_{b/2a}(\lambda_{b},u)\: du , & x \leq a, \\
 & e^{\frac{b}{2a}(a^2-x^2)}  \frac{x^{-\nu}}{a^{-\nu}} \int_0^{\infty}  \frac{K_{\nu}(\sqrt{2}xu)}{K_{\nu}
 (\sqrt{2}au)}
p^{\nu}_{b/2a}(\lambda_{b},u)\: du,  &   x \geq a,
\end{array}
\right.
\end{equation*}
where $\lambda_{b}= \sqrt{(\lambda^2 + b^2)/2}$, $\lambda\in
\mathbb{R}$.
\end{theorem}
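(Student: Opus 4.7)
The plan is to derive both identities by specializing Corollary \ref{thm-straight-lines} to $f\equiv a$ in two regimes: $\beta=b/a$, for which the support of $T^{f^{(b/a)}}$ is $(0,\infty)$, and $\beta=-b/a$, for which the support is $(0,a/b)$; these are then combined with the known spectral and Laplace information on $T^a$ recorded in \eqref{eq:laplace-bessel}--\eqref{eq:density-bessel}.

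For the first (series) identity, I take $\beta=b/a$ in Corollary \ref{thm-straight-lines}, so that $f^{(b/a)}(t)=a+bt$. I plug in the eigenfunction expansion \eqref{eq:density-bessel} of $\mathbb{P}_x^{\nu}(T^a\in d\cdot)$, which is available because $x<a$, and unfold the action of $S^{(b/a)}$ on the density: this sends each exponent $-j_{\nu,k}^{2}s/(2a^{2})$ at time $s=t/(1+bt/a)$ into $-j_{\nu,k}^{2}t/(2a(a+bt))$ and introduces a Jacobian factor $(1+bt/a)$. Combining this with the $(1+bt/a)^{\nu-2}$ and the exponential prefactor supplied by Corollary \ref{thm-straight-lines} produces the stated series after elementary algebra on the $(1+bt/a)$ powers.

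For the Laplace transform, the density route fails when $x\ge a$ since $T^{a}$ is then defective. Instead, I apply Corollary \ref{thm-straight-lines} with $\beta=-b/a$ (so that $\zeta^{(-b/a)}=a/b$ and $f^{(-b/a)}(t)=a-bt$), multiply by $e^{-\lambda^{2}t/2}$, and integrate over $(0,a/b)$. The change of variables $s=t/(1-bt/a)$, equivalently $t=as/(a+bs)$, recasts this as an integral over $(0,\infty)$ against the sub-probability measure $\mathbb{P}_x^{\nu}(T^{a}\in ds)$. An algebraic rearrangement of the exponent, using $2\lambda_{b}^{2}=\lambda^{2}+b^{2}$, collapses the cross terms and yields
\[
\mathbb{E}_x^{\nu}\bigl[e^{-\lambda^{2}T^{a-b\cdot}/2}\bigr]
= e^{\frac{b}{2a}(a^{2}-x^{2})}\,
\mathbb{E}_x^{\nu}\Bigl[\bigl(1+bT^{a}/a\bigr)^{-\nu-1}\,e^{-\lambda_{b}^{2}T^{a}/(1+bT^{a}/a)}\Bigr].
\]
The same identity may be obtained more conceptually by writing $T^{a-b\cdot}=H^{(b/a,a)}/(1+(b/a)H^{(b/a,a)})$ from \reflemma{doob} and applying the Radon--Nikodym formula of \reflemma{abs-diff} at $T^{a}$, using $X_{T^{a}}=a$.

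The closing step is to recognise the functional on the right-hand side as a squared-Bessel Laplace transform. By \eqref{eq:sg}, with $\delta/2=\nu+1$, time $b/(2a)$, starting point $\lambda_{b}$, and Laplace parameter $T^{a}$,
\[
\bigl(1+bT^{a}/a\bigr)^{-\nu-1}\,e^{-\lambda_{b}^{2}T^{a}/(1+bT^{a}/a)}
= \int_{0}^{\infty} e^{-u^{2}T^{a}}\,p^{\nu}_{b/(2a)}(\lambda_{b},u)\,du.
\]
Substituting, applying Fubini, and identifying $\mathbb{E}_x^{\nu}[e^{-u^{2}T^{a}}]$ through \eqref{eq:laplace-bessel} as $\frac{x^{-\nu}}{a^{-\nu}}\frac{I_{\nu}(\sqrt{2}xu)}{I_{\nu}(\sqrt{2}au)}$ when $x\le a$ and as $\frac{x^{-\nu}}{a^{-\nu}}\frac{K_{\nu}(\sqrt{2}xu)}{K_{\nu}(\sqrt{2}au)}$ when $x\ge a$ produces the claimed integral representation. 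The main obstacle I expect is the exponent bookkeeping that condenses the raw expression into the displayed intermediate identity; a secondary point is justifying Fubini and the optional-stopping argument in the defective case $x>a$, which is harmless because the integrand is bounded and decays rapidly as $T^{a}\to\infty$.
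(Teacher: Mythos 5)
Your proposal takes essentially the same route as the paper: the series identity is Corollary \ref{thm-straight-lines} combined with \eqref{eq:density-bessel}, and for the Laplace transform the paper proceeds exactly as in your ``more conceptual'' variant---Lemmas \ref{doob} and \ref{abs-diff} with $\beta=-b/a$ and optional stopping at $T^a$ (using $X_{T^a}=a$) give precisely the intermediate identity you display, after which \eqref{eq:sg} linearizes $(1+bT^a/a)^{-\delta/2}e^{-\lambda_b^2 T^a/(1+bT^a/a)}$ into $\int_0^\infty e^{-u^2T^a}p^{\nu}_{b/2a}(\lambda_b,u)\,du$ and \eqref{eq:laplace-bessel} concludes. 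One caveat on the first part: actually carrying out the algebra from Corollary \ref{thm-straight-lines} (whose $S^{(b/a)}$ contributes an extra factor $(1+\frac{b}{a}t)$ when acting on densities) yields the prefactor $(1+\frac{b}{a}t)^{\nu-1}e^{-\frac{b}{2a}(a^2-x^2)-\frac{b^2}{2}t}$ rather than the displayed $(1+\frac{b}{a}t)^{-\nu-2}e^{\frac{b}{2a}(a^2-x^2)+\frac{b^2}{2}t}$ (compare with the Bachelier--L\'evy formula at $\nu=-1/2$, $x=0$, and with the theorem's own second display after inverting the Laplace transform), so the theorem's first formula appears to contain sign and exponent typos and your assertion that the elementary algebra ``produces the stated series'' should not be taken literally.
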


\begin{proof} The first statement results from
a combination of Corollary \ref{thm-straight-lines} and relation
(\ref{eq:density-bessel}). Next, using Lemmae \ref{doob} and
\ref{abs-diff}, with
 $\beta=-b/a$, we can write
\begin{eqnarray*}
& &\mathbb{E}_x^{\nu}\left[ e^{- \frac{\lambda^2}{2} T^{a-b\cdot}};
T^{a-b\cdot}<\frac{a}{b} \right]
\\&=& \mathbb{E}_x^{\nu}\left[
 e^{- \frac{\lambda^2}{2} \frac{H^{(b/a, a)} }{1+\frac{b}{a} H^{(b/a,a)}}}; H^{(b/a,a)}<\infty\right] \\
& = & \mathbb{E}_x^{\nu}\left[
 e^{- \frac{\lambda^2}{2} \frac{T^{a}}{1+\frac{b}{a} T^{a}}} \left(1+\frac{b}{a}
 T^{a}\right)^{-\delta/2}e^{ \frac{b}{2a}
\left(\frac{a^2}{1+\frac{b}{a} T^{a}}-x^2\right)}\right] \\
& = & e^{\frac{b}{2a}(a^2-x^2)} \mathbb{E}_x^{\nu}\left[
 e^{-\frac{\lambda_{b}^2 T^{a}}{1+\frac{b}{a} T^{a}}} \left(1+\frac{b}{a} T^{a}\right)^{-\delta/2}\right] \\
& = & e^{\frac{b}{2a}(a^2-x^2)} \int_0^{\infty}
p^{\nu}_{b/2a}(\lambda_{b},u) \mathbb{E}_x^{\nu}\left[ e^{- u^2
T^{a}} \right]du
\end{eqnarray*}
where the last line follows from \eqref{eq:sg}. It remains to
use (\ref{eq:laplace-bessel}) to conclude.
\end{proof}

\begin{remark} The process $(Y_t:=X_t+bt,t\geq 0)$ is a non-homogeneous Markov process and solves the sde
\begin{equation*}
Y_t=B_t+\frac{\delta-1}{2}\int_0^t\frac{ds}{Y_s-bs}+bt,\quad t\geq
0.
\end{equation*}
This is to be distinguished from a  Bessel process with a "naive"
drift $b$ introduced in \cite{Yor-84} and defined as a solution to
\begin{equation*}
Z_t=B_t+\frac{\delta-1}{2}\int_0^t\frac{ds}{Z_s}+bt,\quad t\geq 0.
\end{equation*}
\end{remark}
 \begin{remark} The Bessel process of dimension  $\delta=1$ is, in fact, the reflected Brownian motion.
It follows that the associated first hitting times  can be
interpreted as double barrier hitting times. That is, with $f$ as
above,  the time when a Brownian motion $B$ hits one of the curves
$x\pm f(\cdot)$, i.e. $\inf\{s>0; \: B_s=x\pm f(s)\}$.
\end{remark}

\section{Concluding remarks and some comments}
\subsection{}
It is plain that the results of Theorem \ref{thm} can be readily
extended to any $h$-transform of the process $X$, but we need to
take care of the life-times of the involved processes in the Bessel case. In the Brownian
setting, one gets similar results for the process
$X^{\epsilon}_t=X_t+y \epsilon t,\: t\geq0$, where $\epsilon$ is
an independent symmetric Bernoulli random variable taking  values
in $\{-1,1\}$. Observe that $i(X^{\epsilon})$ is a strong Markov
process. However, because the latter starts at the random point $
y \epsilon$, $X^{(\epsilon)}$ does not satisfy the time inversion
property \eqref{eq:ti}.

\subsection{}
 Assuming that $X$ is a $2$-self-similar strong Markov process with
c\`adl\`ag paths (i.e.~with possible jumps) enjoying the time
inversion property, then Theorem 5.8 in \cite{Pitman-Yor-81} or
Theorem 1 in \cite{Gallardo-Yor-05} allow to extend  Lemma
\ref{doob} and Lemma \ref{abs-diff} and an
analogue of Theorem \ref{thm}  can be
stated.  However, we did not succeed to construct examples of
$\mathbb{R}_+$-valued processes enjoying the time inversion
property other than Bessel processes in the usual or wide sense.
This is the reason why the setting is restricted to the continuous
one. We refer to \cite{Patie-08a} where the second author
characterizes, through its Mellin transform, the law of the first
passage time above the square root boundary for spectrally
negative positive $2$-self-similar Markov processes.

\subsection{}

 In \cite{Durbin-85}, Durbin  considered the studied boundary crossing problem
for continuous gaussian processes and showed that, in the
absolute-continuous case, the problem reduces to the computation
of a conditional expectation. In particular, for the Brownian motion, if
$f$ is continuously differentiable  and $f(0)\neq 0$ then \[ \P
\left(T^{f}\in dt \right)=\frac{1}{\sqrt{2\pi
t}}e^{-\frac{f^2(t)}{2t}}h(t) dt\] where
\begin{equation*}
h(t)=\lim_{s\nearrow t}\frac{1}{t-s}\mathbb{E}\left[B_s-f(s);
T^{f}>s \big{|}B_t=f(t) \right],\quad t>0.
\end{equation*}
 There seems to be no known way to compute the function
$h$. It is shown in \cite{Durbin-88} that this method is in
agreement with the standard method of images. With the obvious notation, it is clearly possible to relate $h^{(\beta)}$ to $h$.

\subsection{}
 We learned from Kendall \cite{Kendall-04} an intuitive
interpretation of Durbin's expression involving the family of local
times of $X$ at $f$ denoted by $ (L_{t}^{X=f}, t\geq 0)$. Indeed,
observe that Durbin's formula can be rewritten as $\PROB
\left(T^{f}\in dt \right)=h(t){\mathbb E}[dL_t^{X=f} ]$, $t\geq 0$.
The latter when integrated over $[0,a]$, yields ${\mathbb
P}\left(T^{f}<a \right)={\mathbb E}\left[\int_0^{a}h(s)dL_s^{X=f}
\right]$. Such a decomposition is not unique and many can be
constructed from the above one. A natural non trivial one to
consider is motivated by the following observation found in \cite{Kendall-04}. If
$g:\mathbb{R}^+\times \mathbb{R}^+\rightarrow \mathbb{R}^+$ solves
 ${\mathbb E}\left[\int_t^{a}g(s,a)dL_s^{X=f}\big|X_t=f(t)
\right]=1$, for $0< t<a$,
 then clearly ${\mathbb
E}\left[\int_0^{a}g(s,a)dL_s^{X=f} \right]={\mathbb P}\left(T^{f}<a
\right)$. However, it is not clear how to express $g$ in
 terms of $h$. W. Kendall  told the first author that the above interpretation could be interesting for further investigations for the studied boundary crossing problem.

\subsection{} In the Brownian case, it is interesting to analyze the impact of our identities on
some integral equations satisfied by the studied densities.
However, some of them lead to
 obvious facts explainable by change of variables. As an example, we observe that if $f$ is positive and does not vanish then $X$, when
started at $x>f(0)$, must hit $f$ before reaching $0$. The strong
Markov property then gives
\begin{equation*}
{x\over{\sqrt{2\pi t^3}}}e^{-{x^2\over {2t}}}=\int_0^t
\mathbb{P}_x(T^{f} \in dr) {f(r)\over {\sqrt{2\pi
{(t-r)^3}}}}e^{-{f(r)^2\over {2(t-r)}}},\:\:\:\: t>0.
\end{equation*}
This is easily shown to be in accordance  with the result stated in
Theorem \ref{thm}. For the above and other classical integral
equations we refer to \cite{Durbin-85},
 \cite{Ferebee-83}, \cite{Lerche-86} and also to
 \cite{Peskir-02-2} for some more recent ones.

\subsection{}

 The technics used in the example treated in  Subsection \ref{examples}, for Brownian motion,
can be applied
  to Bessel processes and
 square root curves. The required results, for that end, can be found in
Delong \cite{Delong-81} or in Yor \cite{Yor-84}.  This law is connected via a deterministic
time change to the one of the first passage time to a fixed level by the radial norm of a $\delta$-dimensional Ornstein-Uhlenbeck process, where $\delta$ is some positive integer. Due to the
 stationarity property,  the  law of the first passage times   can be
expressed as  infinite convolutions
 of mixture of exponential distributions, see Kent \cite{Kent-80}.

{\bf Acknowledgment:} We are grateful to  F.~Delbaen,
W.S.~Kendall,  D.~Talay and M.~Yor  for inspiring and fruitful
discussions on the topic. The first author would like to thank the
University of Bern for  their kind invitation for a visit during which
a part of this work was carried out. We thank two anonymous referees for their careful reading and useful comments that   helped to improve the presentation of  the paper.





%
%

\end{document}